\newlength{\defbaselineskip} \setlength{\defbaselineskip}{\baselineskip}
\newtheorem{thm}{Theorem}[section]
\newtheorem{cor}[thm]{Corollary}
\newtheorem{corr}[thm]{Corollary}
\newtheorem{lemma}[thm]{Lemma}
\newtheorem{lem}[thm]{Lemma}
\newtheorem{prop}[thm]{Proposition}
\theoremstyle{definition}
\newtheorem{example}[thm]{Example}
\newtheorem{rem}[thm]{Remark}
\tikzset{
  edge node/.code={%
      \expandafter\def\expandafter\tikz@tonodes\expandafter{\tikz@tonodes #1}}}
\tikzset{
  subseteq/.style={
    draw=none,
    edge node={node [sloped, allow upside down, auto=false]{$\subseteq$}}},
  Subseteq/.style={
    draw=none,
    every to/.append style={
      edge node={node [sloped, allow upside down, auto=false]{$\subseteq$}}}
  }
}
 \numberwithin{equation}{section}
\numberwithin{equation}{section} \theoremstyle{definition}
\DeclareMathOperator{\Hom}{Hom}
\DeclareMathOperator{\Sym}{Sym}
 \DeclareMathOperator{\coker}{coker}
          \newcommand\PP{{\mathbb{P}}}
          \newcommand\cc{{\mathcal{L}  }}
           \newcommand\G{{\mathcal G}}
          \newcommand\oo{\mathcal O}
          \newcommand\Z{\mathbb{Z}}
          \newcommand\rk{\mathrm{rk}}
\definecolor{zielony}{rgb}{0.5, 0.9, 0.1}
\definecolor{czerwony}{rgb}{0.8, 0.2, 0.1}
\definecolor{niebieski}{rgb}{0.3, 0.1, 0.9}
\newcounter{appendice}
\begin{document}
\title{Symmetric locally free resolutions  \\
and rationality problems}
\author{Gilberto Bini}
\address[G.~Bini]{Universit\`a degli Studi di Palermo \\
Dipartimento di Matematica e Informatica \\
Via Archirafi 34 \\
90123 Palermo (Italy) \\
E-mail: {\tt gilberto.bini@unipa.it}
}
\author{Grzegorz Kapustka} 
\address[G.~Kapustka]{Jagiellonian University \\
Department of Mathematics and Informatics \\
\L ojasiewicza 6 \\ 
30-348 Krak\'ow (Poland)\\
E-mail: {\tt grzegorz.kapustka@uj.edu.pl}}
\author{Micha\l\ Kapustka}
\address[M.~Kapustka]{Institute of Mathematics of the Polish Academy of Sciences \\
ul. \'Sniadeckich 8 \\
00-656 Warszawa (Poland)\\
E-mail: {\tt michal.kapustka@impan.pl}
}

\begin{abstract}
We show that the birationality class of a quadric surface bundle over $\mathbb{P}^2$ is determined by its associated cokernel sheaves. As an application, we discuss stable-rationality of very general quadric bundles over $\PP^2$ with discriminant curves of fixed degree. In particular, we construct explicit models of these bundles for some discriminant data. Among others, we obtain various birational models of a nodal Gushel--Mukai fourfold, as well as of a cubic fourfold containing a plane. Finally, we prove stable irrationality of several types of quadric surface bundles.
%Finally, besides deducing the non stable-rationality of several types of quadric surface bundles, we prove that of some complete intersections of small degree.
\end{abstract}
\maketitle

\section{Introduction}\label{introduction}
A quadric bundle is a smooth variety $Q$ equipped with a flat morphism ${Q}\rightarrow B$ whose general fiber is a smooth quadric. The problem of rationality of quadric bundles has been a classical subject of investigation dating back to works of Artin and Mumford \cite{AM}, and considerable progress has been made recently: see, for instance, \cite{HPT} and \cite{Sch}. In this paper, we investigate quadric bundles of relative dimension 2 over the projective plane. We call them {\em quadric surface bundles}.

For a quadric bundle $Q$ of relative dimension $n$ over $\mathbb P^2$, there exists a vector bundle $\mathcal G$ of rank $n+2$ on $\mathbb P^2$ such that ${Q}$ is a divisor of relative degree $2$ on $\mathbb{P}(\mathcal G)$, see \cite[Proposition 1.2]{B3}. 
The quadric bundle $Q$ is then determined by a non-zero section $G\in H^0(\Sym^2(\mathcal G^{\vee})(\delta))$ for some $ \delta\in \mathbb Z$ or, equivalently, by a symmetric map 
$q_G: \mathcal G (-\delta) \to \mathcal{G}^{\vee}$ which induces the following short exact sequence:
\begin{equation}\label{lfres}
0\to\mathcal{G}(-\delta) \xrightarrow{q_G} \mathcal{G}^{\vee}\to \cc \to 0.
\end{equation}
In this case, $\cc$ is a sheaf supported on a curve defined by $\det q_G$, the so-called {\em discriminant curve}, and is determined by ${Q}$ up to a line bundle twist. We will call the {\em cokernel of $ Q$} the equivalence class $[\mathcal{L}]$ of the sheaf $\mathcal L$  with respect to the equivalence relation: $$\mathcal L_1\sim \mathcal L_2 \Leftrightarrow \exists \, \, \gamma \in \mathbb Z: \mathcal L_1= \mathcal L_2(\gamma).$$ The sheaves representing the cokernel of a quadric bundle $Q$ will be called {\em cokernel sheaves} associated with $Q$.

In this paper, we prove that the birational type of a quadric surface bundle  over ${\mathbb P}^2$ with smooth discriminant curve is determined by its cokernel.

\begin{thm}\label{main}  %Let $\mathcal{L}$ be a line bundle on a smooth plane curve. %Assume there exist two symmetric resolutions:
%\[0\to\mathcal{G}(-\delta_1)\xrightarrow{q_G} \mathcal{G}^{\vee}\to \cc(k_1)\to 0, \qquad \qquad 0\to\mathcal{F}(-\delta_2)\xrightarrow{q_F} \mathcal{F}^{\vee}\to \cc(k_2)\to 0,\]
%for some  vector bundles $\mathcal{G}$, $\mathcal{F}$ of rank 4 and $\delta_1,\delta_2,k_1,k_2 \in \mathbb{Z}$.
Let $ Q$ and ${Q}'$ be two quadric surface bundles over $\mathbb{P}^2$ having the same cokernel $[\mathcal{L}]$ supported on a smooth discriminant curve. {\color{black} Then the generic fiber of $ Q$ and that of $ {Q}'$ are isomorphic.}
\end{thm}
%Our proof of Theorem \ref{main} will be at the end of Section \ref{thm}.
We prove Theorem \ref{main} in two steps. First, in Theorem \ref{general IOOV}, we provide an explicit relation between the Brauer class associated with the quadric surface bundle and its cokernel. Second, we show in \S\ref{proof of main}, that two quadric surface bundles {\color{black} are isomorphic at the generic point of $\mathbb{P}^2$} if their corresponding Brauer classes are equal. In fact, the second step is valid also for quadric bundles defined over an open subset of $\mathbb{P}^2$, see Corollary \ref{birational quadric bundle}. As observed by A. Kuznetsov \cite{Kuz1}, one can alternatively prove Theorem \ref{main} by showing that any two quadric bundles with the same cokernel are linked through quadric reductions via quadric bundles of higher ranks.

%In this paper we investigate resolutions of symmetric sheaves on a smooth plane curve $C$ via locally free sheaves on the ambient projective space. Remarkably, we manage to deduce some geometric properties on the birational geometry of quadric bundles having $C$ as discriminant locus, which is a rather intriguing subject of investigation dating back to works of Artin and Mumford \cite{AM}, still a very active research area: see, for few examples, \cite{HPT} and \cite{Sch}.

Theorem \ref{main} is in fact valid in wider generality. In particular, we do not need to assume that the quadric bundles are of relative dimension $2$. Here we formulate Theorem \ref{main} for quadric surface bundles, as this is the only unknown case to date. For the relative dimension $1$ the theorem was proven in \cite[Proposition 3.10]{Pro}. Moreover, for relative dimension greater than or equal to $3$, quadric bundles over $\mathbb{P}^2$ always admit an odd degree multi-section (see \cite{Lang}); hence they are rational over $\mathbb{P}^2$. 

In the second part of the paper, we apply Theorem \ref{main} to the birational study of quadric bundles having smooth discriminant curves.  %Our first observation in that part is that we can reconstruct quadric bundles starting from a cokernel. %i.e. a two torsion sheaf or a theta divisor on a smooth curve $C$.
We first observe that for a quadric surface bundle $ Q$ over $\mathbb{P}^2$ the discriminant curve $C$ is always of even degree. Furthermore, any cokernel sheaf $\cc$ associated with $Q$ is a so-called {\em quasi half-period} on $C$, i.e., $\mathcal{L}^2=\mathcal{O}_C(\gamma)$ for some $\gamma\in \mathbb Z$. In particular, one can choose a cokernel sheaf associated to $Q$ that is one of the following 
\begin{enumerate}
\item $\mathcal L$ is trivial, i.e., $\mathcal{L}= \mathcal{O}_C$;
\item  $\mathcal L$ is a half-period, i.e., $\mathcal{L}^2=\mathcal{O}_C$ and $\mathcal{L}\neq \mathcal{O}_C$; %Such $ \mathcal L$ is called a half-period.
\item $ \mathcal L$ is an even theta characteristic, i.e.,  $\mathcal{L}^2=\omega_C$ and $h^0(\cc)$ is even; %Such $ \mathcal L$ is called an even theta characteristic.
\item $ \mathcal L$ is an odd theta characteristic, i.e., $\mathcal{L}^2=\omega_C$ and $h^0(\cc)$ is odd.
%Such $ \mathcal L$ is called an odd theta characteristic.
\end{enumerate}
If we fix the (even) degree of the discriminant curve, these cases define four irreducible (cf. \cite[Propositions 2 and 3]{B2} and \cite{V1}) moduli spaces of pairs $(C,\cc)$ of plane curves of fixed degree equipped with a suitable quasi-half period.
%, where $\cc$ is either trivial, or a half period or a theta characteristic, even if $h^0(\cc)$ is even or odd if $h^0(\cc)$ is odd. 
We refer to these four moduli spaces as moduli spaces of cokernels in the fixed degree.
By Theorem \ref{main} and the discussion above, the disjoint union of moduli spaces of cokernels in degree $2d$ provides a parametrization of the set of {\color{black} isomorphism  types  of generic fibers of} quadric surface bundles with smooth discriminant curves of degree $2d$.  %Moreover, by abuse of terminology, we will refer to (very) general quadric surface bundles of given type as quadric surface bundle whose cokernel is (very) general in its corresponding moduli space.
 %Indeed, let  $\cc$ be a sheaf supported on an even degree smooth curve $C\subset \PP^2$ such that $\cc^2=\oo_{C}(n)$ for some $n \in \mathbb{Z}$. We will call such a sheaf a {\em quasi half-period} supported on $C$. 

In our first application of Theorem \ref{main}, we provide explicit birational models of quadric surface bundles corresponding to general elements of each of the four moduli spaces of cokernels supported on low degree curves.
\begin{prop}\label{qw} Let $C$ be a general smooth plane curve of degree $2d\leq 12$. Let $Q$ be a quadric surface bundle over $\mathbb{P}^2$ with discriminant curve $C$. Then $Q$ {\color{black} is isomorphic at the generic point of $\mathbb{P}^2$ with} one of the following:
\begin{enumerate}
\item a trivial quadric surface bundle,
\item a complete intersection of $d$ divisors of bidegree $(1,1)$ and one divisor of bidegree $(0,2)$  in $\mathbb{P}^2\times \mathbb{P}^{d+3}$,
\item a complete intersection  of $d-2$ divisors of bidegree $(1,1)$ and one divisor of bidegree $(1,2)$ in $\mathbb{P}^2\times \mathbb P^{d+1}$,
\item the residual component  of an intersection of $d-3$ quadrics and a cubic containing a $\mathbb{P}^{d-1}$ in $\mathbb{P}^{d+2}$, with (rational) fibration structure given by projection from $\mathbb{P}^{d-1}$.
\end{enumerate}
\noindent Conversely, a general curve $C$ appears as a discriminant curve of some quadric surface bundle in any of the four families above. 
\end{prop}
The proof is given in Section \ref{rat}.
The constructions in Proposition \ref{qw} are obtained as follows. As proved in \cite{B,C}, a quasi-half period $\cc$ on a smooth plane curve of even degree always admits a symmetric free resolution, i.e., a resolution like (\ref{lfres}) with $\mathcal G$ a split bundle. In particular, $\mathcal L$ is a cokernel sheaf of a quadric bundle being a divisor in the projectivization of a split bundle, possibly of high rank. Applying quadric reduction (cf.\cite{ABB}) to these bundles,
we obtain many symmetric locally free resolutions of the same cokernel sheaf.
We infer, in particular, that $\cc$ is the cokernel of many quadric surface bundles.
These are contained in $\mathbb{P}^3$-bundles which are not necessarily projectivizations of split bundles. We are able to explicitly find suitable quadric reductions for all the families of high rank quadric bundles constructed in \cite{B,C} for $2d \leq 12$. This leads, among others, to the constructions in Proposition \ref{qw}.
%; however their classification for a given quasi half-period $\mathcal L$  on a fixed curve $C$ remains an open problem. %The map $q_G$ in (\ref{lfres}) is naturally associated to a non-zero section $G\in H^0(\Sym^2(\mathcal{G}^{\vee})(\delta))$ and induces a quadric bundle $Q_G\subset \PP(\mathcal{G}^{\vee})$.
%A natural question is to determine when two such bundles over ${\mathbb P}^2$ are birational or not.  Our main result in this direction is the following.
%Our proof of Theorem \ref{main}, which will be at the end of Section \ref{thm}, gives an explicit relation between the bundle $\cc$ and the Brauer class associated  with  the quadric bundle given by any of its resolutions.  As observed by A. Kuznetsov \cite{Kuz1}, one can alternatively prove Theorem~\ref{main} by showing that any two resolutions are linked through quadric reductions via quadric bundles of higher ranks.

The same method can be applied to provide alternative birational models for quadric surface bundles from Proposition \ref{qw}. In particular, in Corollary \ref{deg6}, we find alternative models for Verra fourfolds ($2:1$ covers of $\PP^2\times \PP^2$ branched along a $(2,2)$ divisor) and for cubic fourfolds containing a plane, blown up along the plane.
An interesting consequence is a comparison of two families of fourfolds for which the rationality of a general element is still a wide open conjecture, namely Verra fourfolds and Gushel--Mukai fourfolds. The latter are fourfolds obtained as proper intersections of a cone over the Grassmannian $G(2,5)$ in its Pl\"ucker embedding with a seven-dimensional quadric in $\mathbb{P}^{10}$. We prove that Verra fourfolds are birational to singular one-nodal Gushel--Mukai fourfolds. %varieties obtained as intersections of a cone over the Grassmannian $G(2,5)$ in its Pl\"ucker embedding with seven-dimensional quadrics.
In particular, combining that with results of \cite{NS}, we deduce that the stable irrationality of a very general Verra fourfold would also imply the stable irrationality of a very general smooth Gushel--Mukai fourfold.

Finally, we use Theorem \ref{main} and the specialization method (cf. {\color{black}\cite{ V, CTP, HPT,Sch1}}) to prove stable irrationality of quadric surface bundles whose associated cokernel is very general in its moduli space in some degrees. 
\begin{corr}\label{cor irrationality}
\begin{enumerate}
\item A  quadric surface bundle on $\PP^2_{\mathbb C}$ with  cokernel that is very general in the moduli space of  half-periods on curves of degree $8\leq 2d \leq 18$ is not stably rational.
\item  A  quadric surface bundle on $\PP^2_{\mathbb C}$ with  cokernel that is very general in the moduli space of  theta characteristics on curves of degree $8\leq 2d \leq 14$ is not stably rational.
\item A  quadric surface bundle on $\PP^2_{\mathbb C}$ with  cokernel that is very general in the moduli space of   odd theta characteristics on curves of degree $9\leq 2d \leq 12$ is not stably rational.
\end{enumerate}
\end{corr}
In particular, we obtain stable irrationality of the very general element of the following families.
\begin{enumerate}
\item a complete intersection of $d$ divisors of  bidegree  $(1,1)$ and one divisor of bidegree $(0,2)$ in $\mathbb{P}_{\mathbb C}^2\times \mathbb{P}_{\mathbb C}^{d+3}$ for $4\leq d\leq 9$,
\item a complete intersection of type $d-2$ divisors of  bidegree  $(1,1)$ and one divisor of bidegree $(1,2)$ in $\mathbb{P}_{\mathbb C}^2\times \mathbb P_{\mathbb C}^{d+1}$, for $4 \leq d\leq 7$,
\item the residual component of an intersection of $d-3$ quadrics and a cubic containing a $\mathbb{P}_{\mathbb C}^{d-1}$ in $\mathbb{P}_{\mathbb C}^{d+2}$ for $4 \leq d\leq 6$.
\end{enumerate}
We produce more stably irrational quadric surface bundles by applying Corollary \ref{cor irrationality} to alternative models of quadric surface bundles listed in Section \ref{ex}.

Throughout we mainly work with an algebraically closed field $\mathbf k$ of characteristic different from 2.  Whenever we use other assumptions on the field, we state them explicitly.

\section{Quadric bundles from symmetric resolutions of the same sheaf}\label{thm}
By \cite[Proposition 1.2]{B3} any quadric bundle over $\mathbb{P}^2$ arises from the following construction. 
Let $\mathcal{G}^{\vee}$ be a locally free sheaf on $\PP^2$. We use the notation $\mathbb{P}(\mathcal G)=\operatorname{Proj} S^{\bullet}(\mathcal G)$ for the projectivization by hyperplanes in the fibers. Denote by $\oo_{\PP(\mathcal G^{\vee})}(1)$ the tautological sheaf and let $0\neq G\in H^0(\Sym^2(\mathcal G^{\vee})(\delta)).$ Thus $G$ induces a family of quadric forms $ \oo_{\PP^2}\to \Sym^2(\mathcal{G}^{\vee})(\delta)$
and  a symmetric map $q_G\colon \mathcal{G}(-\delta)\to \mathcal{G}^{\vee}$  whose cokernel is a sheaf  $\cc$. Hence we have an exact sequence
\begin{equation}
\label{shortexactsequence}
0\to \mathcal{G}(-\delta)\xrightarrow{q_G} \mathcal{G}^{\vee}	\to \cc \to 0.
\end{equation}
Assume $q_ G$ is generically non-degenerate. Then the zero locus $Q_G\subset \PP(\mathcal{G}^{\vee})$ of $G$, equipped with the  natural projection map to $\mathbb{P}^2$, is a quadric bundle.

Set $C=\operatorname{Supp}(\cc)$, the scheme theoretic support of $\cc$.  Then $C$ is a plane curve and  $\deg C = 2c_1\left({\mathcal G}^{\vee}\right)+\rk({\mathcal G}^{\vee})\delta$. Observe that the curve $C$ is the  discriminant curve of the quadric bundle $Q_G$ i.e. the locus over which the fiber is singular. Indeed, this is exactly the locus where the symmetric map $q_G$ has lower rank. 

Note that not only the discriminant curve is determined by the quadric bundle $Q_G$. In fact, from the proof of \cite[Proposition 1.2]{B3} it follows that for every quadric bundle $Q$ over $\mathbb{P}^2$ there exists a unique projective bundle $\mathbf P$ over $ \mathbb{P}^2$ such that $Q$ appears on $\mathbf P$ as a divisor of relative degree 2. In other words, in our notation, we have $\mathbb P(\mathcal G)$ is determined by $Q_G$.  In particular, $\mathcal G$ is determined by $Q_G$ up to a twist by a line bundle on $\mathbb{P}^2$ and consequently also $\mathcal L$ is determined by $Q_G$ up to a twist by a line bundle on $\mathbb{P}^2$.

{\color{black} In what follows, we assume that all considered quadric bundles arise} from sheaves $\mathcal G$ of even rank and maps $q_G$ of generic maximal rank such that the discriminant curve $C$ is smooth and the corank two locus is empty. 

{\begin{lem}\label{quasi half period} If the corank two locus of $q_G$ is empty, then the sheaf $\cc$ is a quasi half-period on $C$, i.e., $\mathcal{L}^2=\mathcal{O}_C(\gamma)$ for some $\gamma\in \mathbb Z$..
\end{lem}
\begin{proof}
Here we closely follow Theorem 3.1 in \cite[p. 8]{DK}. The determinant of the short exact sequence \eqref{shortexactsequence} gives
a self-duality $ \cc\otimes \det(\mathcal{G}^{\vee})\to (\cc\otimes \det(\mathcal{G}^{\vee})(\delta))^{\vee}$ on the complement of the corank two locus inside the corank one locus, namely $C$. Under our assumptions, the corank two locus is empty, so the self-duality above is defined on the whole of $C$. We infer $\cc^2= (\det \mathcal G)^2\otimes \mathcal O_C(-\delta)=\mathcal O_C( \rk (\mathcal G)\delta -\deg C-\delta )$ which implies that $\cc$ is a quasi half-period.
\end{proof}
}

 Let $S$  be the double cover of $\mathbb{P}^2$ branched along $C$. We will call it {\em the discriminant double cover} associated to $Q_G$. We define $\beta_{Q_G}\in \operatorname{Br}(S)$ to be the class of the even Clifford algebra associated with $Q_G$ viewed as an algebra over its center, whose underlying scheme is $S$ (cf. \cite[\S 1.6]{ABB}). This will be identified, {\color{black}in Section \ref{Step 1}}, with a pullback of a class in the Brauer group  $\operatorname{Br}(\mathbf k(\mathbb P^2))$. 
\begin{rem}

More geometrically, over the complex numbers the Brauer--Severi scheme associated with a quadric bundle $Q\to\PP^2$ of relative dimension $2$ is the relative Hilbert scheme of lines in the fibers
of $\pi\colon\operatorname{Hilb} Q\to \PP^2$. This is a $\PP^1$-fibration over the discriminant double cover $S$. In fact, a smooth quadric surface admits two pencils of lines and a corank one quadric surface admits one pencil. The gluing data of this $\PP^1$-fibration gives a Brauer class $\beta_{Q}\in H^2(\oo_{S}^{\ast})_2\simeq \operatorname{Br}(S)$ (see, for instance, \cite{Gr,GS, Yoshioka}). 
\end{rem}

In order to prove Theorem \ref{main} we make an intermediate step by first proving that  $[\mathcal L]$ determines the Brauer class $\beta_{Q_G}$.

%\begin{rem}
%In general, if $Q$ is a quadric bundle of even dimension over $\mathbb{P}^2$, it is still possible to define $\beta_Q$ because the even Clifford algebra associated with $Q$ may be viewed as an algebra over its center, which is supported on the discriminant double cover of $Q$ (cf. \cite[\S 1.6]{ABB}). 
%\end{rem}

\begin{thm}\label{general IOOV}
Let $C$ be a smooth curve of degree $2d$ and $\cc$ a quasi half-period on $C$ considered as a sheaf on $\mathbb{P}^2$. Let 
$$0 \to \mathcal G(-\delta_1) \xrightarrow{q_G} \mathcal G^{\vee}\to \cc(k_1)\to 0, \qquad \qquad 0 \to \mathcal F(-\delta_2)\xrightarrow{q_F} \mathcal F^{\vee} \to \cc(k_2)\to 0,$$
be locally free resolutions, where $\mathcal G$, $\mathcal F$ are vector bundles of even rank on $\mathbb{P}^2$, $\delta_1,\delta_2,k_1,k_2\in \mathbb Z $ and  $q_G$, $q_F$ are symmetric forms corresponding to $ G\in H^0(\Sym^2(\mathcal G^{\vee})(\delta_1))$ and $ F\in H^0(\Sym^2(\mathcal F^{\vee})(\delta_2))$.
Let $Q_G\subset \mathbb{P}(\mathcal G^{\vee})$ and $Q_F\subset\mathbb{P}(\mathcal F^{\vee})$ be the two quadric bundles associated with these resolutions. Then $\beta_{Q_G}=\beta_{Q_F}\in \operatorname{Br}(S)$.
\end{thm}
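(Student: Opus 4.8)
The goal is to show that two symmetric locally free resolutions of the same quasi-half-period $\cc(d)$ give the same Brauer class on the discriminant double cover $S\to\PP^2$. The plan is to construct the Brauer class $\beta_{Q_G}$ intrinsically from the data $(C,\cc)$, in a way that does not reference the particular bundle $\mathcal G$, and then conclude that $\beta_{Q_G}=\beta_{Q_F}$ because both equal this intrinsically-defined class. The key point is that the even Clifford algebra, a priori attached to the quadric bundle $Q_G$, can be recovered from the cokernel sheaf together with its self-duality structure.

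First I would recall the construction of the even Clifford algebra $Cl_0(Q_G)$ as a sheaf of algebras over $\PP^2$, and identify its center: by the theory in \cite{ABB,ABB1,IOOV}, the center is the structure sheaf of the discriminant double cover $S=\Spec(\oo\oplus\oo(-d))$ (or an appropriate twist thereof determined by $\delta$ and the degree $2d$), so that $Cl_0(Q_G)=\pi_*\mathcal B_0$ for an Azumaya algebra $\mathcal B_0$ on $S$, whose class in $Br(S)_2$ is $\beta_{Q_G}$. The essential input is then to describe $\mathcal B_0$, or equivalently $\beta_{Q_G}$, directly in terms of $\cc$. Following the self-duality already extracted in the second lemma above, the cokernel $\cc\otimes\det(\mathcal G^\vee)$ is a quasi-half-period that, after the normalizing twist by $d$, is intrinsic to $(C,\cc)$ and independent of $\mathcal G$. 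The plan is to show that the push-forward $\pi_*\mathcal B_0$, as a module with its algebra structure, is determined by the eigenvalue decomposition of multiplication by the class of $\cc(d)$ under the involution of the double cover, so that the gluing data defining $\beta_{Q_G}$ depends only on the line bundle on $S$ induced by $\cc(d)$.

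The cleanest route is to interpret $\beta_{Q_G}$ geometrically, as the excerpt suggests for the rank-$4$ case: $\beta_{Q_G}$ is the obstruction to the relative Hilbert scheme of lines $\operatorname{Hilb}Q_G\to S$ being a $\PP^1$-bundle, i.e.~its gluing data. I would then show that this $\PP^1$-fibration is canonically identified with a fibration built only from $(S,\cc(d))$: the two rulings of a smooth fiber quadric, together with the monodromy exchanging them along $C$, reconstruct exactly the data of the étale double cover of $C$ determined by the two-torsion/theta class $\cc(d)$, and the section $G$ contributes nothing beyond this to the gluing. Concretely, I would verify that the transition functions for $\operatorname{Hilb}Q_G$ over an open cover of $S$ can be written in terms of local trivializations of $\cc(d)$ alone, so that the resulting Čech $2$-cocycle in $H^2(\oo_S^*)_2$ coincides for $G$ and $F$.

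The main obstacle I anticipate is handling the twisting and the even-relative-dimension case uniformly. For general even-rank bundles the even Clifford algebra is more delicate than in the surface-bundle case, and one must keep careful track of how $\det(\mathcal G^\vee)$ and $\delta$ enter the normalization, ensuring that the quasi-half-period extracted is genuinely the same for both resolutions rather than differing by a line-bundle twist that could shift the Brauer class. Verifying that the self-dual structures induced by $q_G$ and $q_F$ on $\cc(d)$ agree on the nose over all of $C$ (using the emptiness of the corank-two locus) is the crux; once that identification is secured, the equality $\beta_{Q_G}=\beta_{Q_F}$ follows formally from the fact that the even Clifford algebra, and hence its Brauer class, is a functor of the pair $(C,\cc(d))$ with its self-duality, independent of the chosen locally free resolution.
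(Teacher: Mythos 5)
Your high-level strategy --- show that $\beta_{Q_G}$ is an invariant of the pair $(C,\cc(d))$ alone and conclude by symmetry --- is exactly the strategy of the paper. But the step that carries all the content is missing, and the place where you locate the difficulty is not where it actually lies.

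The gap is this: you assert that the gluing cocycle of $\operatorname{Hilb}Q_G\to S$ "can be written in terms of local trivializations of $\cc(d)$ alone" and that "the section $G$ contributes nothing beyond this to the gluing," and at the end you invoke "the fact that the even Clifford algebra \dots is a functor of the pair $(C,\cc(d))$ with its self-duality, independent of the chosen locally free resolution." That last sentence \emph{is} the theorem; stating it as a known fact begs the question. The even Clifford algebra is built from the quadratic form $q_G$ on $\mathcal G$, not from its cokernel, and the whole point is to prove that its Brauer class remembers only the cokernel. Your geometric picture also conflates two different pieces of data: the monodromy exchanging the two rulings determines only the double cover $S\to\PP^2$, which depends on $C$ alone; the class $\cc(d)$ does not enter through that monodromy, and no mechanism is offered for how it would enter the transition functions of the $\PP^1$-fibration. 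Likewise, "the eigenvalue decomposition of multiplication by the class of $\cc(d)$ under the involution" is not a defined construction. Finally, the issue you single out as the crux --- that the self-dual structures induced by $q_G$ and $q_F$ on $\cc(d)$ agree --- is not the hard point; even granting it, one still needs to know that the Brauer class is a function of that self-dual structure.

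What the paper does to close this gap is a residue computation at the generic point. Restricting to $U=\PP^2\setminus l$ and trivializing $\mathcal G|_U$ by Quillen--Suslin, one gets a quadratic form $b$ over $k(\PP^2)$ and its Clifford class $[Cl_U]\in Br(k(\PP^2))[2]$. By the Artin--Mumford exact sequence, such a class is determined by its residues along divisors; only the residues along $C$ and $l$ can be nonzero, and the residue along $l$ is forced by the one along $C$ via the second residue map at the points of $C\cap l$. The residue along $C$ is then computed (via Scharlau's theory of forms over the discrete valuation ring at the generic point of $C$, or equivalently \cite[Thm.~6.1]{IOOV}) to be the nondegenerate form induced on $G^{\#}/G\cong k(C)$, i.e.\ precisely the datum of $\mathcal L$ as a square root. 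This is the step your plan would need to supply; without it, or a substitute of comparable strength, the argument does not go through.
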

 
\subsection{The proof of Theorem \ref{general IOOV}} \label{brauer}
We formulate the proof so that we work only with the quadric bundle $Q_G$ and prove that its Brauer class $\beta_{Q_G}$ is determined by $[\mathcal L]$.  The proof {\color{black} will be divided into several steps.}
 \subsubsection{\bf Step 0. Reduce to the case $\delta_1=\delta_2$ and $k_1=k_2=0$}{
 The preparatory step is based on the trivial observation that twisting a resolution 
 $$0 \to \mathcal G(-\delta) \xrightarrow{q_G} \mathcal G^{\vee}\to \cc(k)\to 0,$$
 by $\mathcal O_{\mathbb P^2}(-k)$, we obtain a new resolution of $\cc$, namely:
  \begin{equation}\label{resolution G'} 0 \to \mathcal G'(-\delta') \xrightarrow{q_G} \mathcal G'^{\vee}\to \cc\to 0,\end{equation}
with $\mathcal G'=\mathcal G(k)$ and $\delta'=\delta+2k$. The latter resolution leads to the same quadric bundle. 
Finally, Lemma \ref{quasi half period} implies that the integer $\delta'$ appearing in a resolution of the form (\ref{resolution G'}) is uniquely determined by $C$, $\mathcal L$ and $\operatorname{rk}(\mathcal G)=\operatorname{rk}(\mathcal G')$. Indeed, 
$$\cc^2(\deg C)= \mathcal O_C( (\rk (\mathcal G')-1)\delta' ).$$
}

\subsubsection{\bf Step 1. Trivialization over an open set in $\PP^2$} \label{Step 1} {Choose a line $L\subset \mathbb{P}^2$ and set $U=\mathbb{P}^2\setminus L$. The map $q_G|_U$  induces a quadric $b$ over the field of rational functions $\mathbf k(\PP^2)$. Indeed, by the famous theorem of Quillen--Suslin $\mathcal G|_U$ is a split bundle over $U$ and the map $q_G|_U$ is given by a symmetric matrix with entries being regular functions on $U$, which are in particular rational functions in $\mathbf k(\PP^2)$.
 
Accordingly, $b$ defines a Clifford algebra $\operatorname{Cl}_U:=\operatorname{Cl}(q_{G}|_U)$. Denote by $\operatorname{Cl}_0^U$ the algebra of even parts, as described in \cite[\S.~3.2]{IK}.
We look at $\operatorname{Cl}_0^U$ as an algebra over its center $\mathbf k(S)$, where $\pi\colon S\to\PP ^2$ is the discriminant double cover. Hence we can associate to it an element $[\operatorname{Cl}_0^U]$ in $\operatorname{Br}(\mathbf k(S))$.
From \cite[V.2.4]{Lam}, the algebra $\pi|_U^* (\operatorname{Cl}_U)$ is Morita equivalent to $\operatorname{Cl}_0^U$. %\pi: \operatorname{Hilb}(Q_G) \to {\mathbb P}^2$. 
%Moreover, $\pi_U^* (\operatorname{Cl}_U)$ is an Azumaya algebra over $S$ (cf. \cite[\S 1.6]{ABB}).
We define $ \beta_{U}:=[\operatorname{Cl}_0^U]=\pi_U^*[\operatorname{Cl}_U]$.
What's more, by \cite[Theorem 4.5]{IK}, $\beta_{U}$ is uniquely determined by $[\operatorname{Cl}_U]$ and does not depend on $U$ because $\beta_{U}=[\operatorname{Cl}_0^U]\in \operatorname{Br}(\mathbf k(S))$ is equal to the class in $\operatorname{Br}(S)\subset \operatorname{Br}(\mathbf k(S))$ defined as in \cite[\S 1.6]{ABB}.
%of the Brauer--Severi scheme (\cite[\S 4]{AM}), which is obtained as the relative Hilbert scheme of lines on $Q$. 
Therefore, $\beta_U$ may also be denoted by $\beta_{Q_G}$. 

 Since we have a symmetric map between trivial bundles over $U$, which is given by a matrix of rational functions with denominators being powers of a fixed linear form, we can apply \cite[Theorem 6.1]{IOOV} to conclude that the Brauer class $\beta_U=\beta_{Q_G}$ is determined by $\mathcal L$. This ends the proof of Theorem \ref{general IOOV}. 
 
 For the sake of completeness, we will outline the proof of  \cite[Theorem 6.1]{IOOV} in our context by means of slightly modified arguments. 
\subsubsection{\bf Step 2.  The Brauer class is determined by its residue along $C$}\label{outline}
Since we assume that $\mathbf k$ is algebraically closed and $\operatorname{char}\mathbf k\neq 2$, the exact sequences in \cite[Theorem~1]{AM} {\color{black}and \cite[\S 3.2]{IOOV}}  give the following diagram:
\begin{equation}\label{qwer}\hspace{-0.3cm}
\xymatrix{
0 \ar[r] &\operatorname{Br}(\mathbf k(\mathbb P^2))[2]  \ar[r]^{\hspace{-0.9cm}\partial} \ar[d]^{\pi^*}& \displaystyle \bigoplus_{x\in (\mathbb P^2) ^{(1)}} H^1(\mathbf k(x),\mathbb{Z}/2)  \ar[r]^{\hspace{0.9cm}\rho} \ar[d]^{\pi^*} &  \displaystyle    \bigoplus_{P\in (\mathbb P^2)^{(2)}} {\mathbb{Z}/2}  \ar[d]^{\pi^*}\\
\operatorname{Br}(S)[2] \ar@{^{(}->}[r] &\operatorname{Br}(\mathbf k(S))[2]  \ar[r]^{\hspace{-0.9cm}\partial} & \displaystyle \bigoplus_{x\in (S) ^{(1)}} H^1(\mathbf k(x),\mathbb{Z}/2)  \ar[r]^{\hspace{0.9cm} \rho}&   \displaystyle   \bigoplus_{P\in (S)^{(2)}} {\mathbb{Z}/2} ,}
\end{equation}
where $T^{(r)}$ is the set of points of codimension $r$ of $T$; $\mathbf k(x)$ is the residue field of $x$; by  $V[2]$ we mean the $2$-torsion subgroup of the group $V$ and $H^1$ refers to Galois cohomology. Finally, the horizontal maps are induced by residues. In particular, the  map $\rho$ is illustrated in \cite[\S.~3.2, diagram 4.1, proof of Proposition 4.3]{IOOV}} (it is denoted by $r$ in the reference). 

\begin{comment}Actually, the exact sequence coming from \cite[Theorem~1]{AM} does not involve the $2$-torsion group; this can be however adapted as follows. We will refer to the top horizontal line in the diagram above, as the bottom horizontal line can be obtained similarly. First, recall that by definition we have
$$
\operatorname{Br}(\mathbf k(\mathbb P^2))[2] \simeq \operatorname{Br}(\mathbf k(\mathbb P^2))/2\operatorname{Br}(\mathbf k(\mathbb P^2));
$$
moreover, residues induce a map from $2\operatorname{Br}(\mathbf k(\mathbb P^2))$ to $\bigoplus_{x\in (\mathbb P^2)^{(1)}} H^1(\mathbf k(x),2\left(\mathbb{Q}/\mathbb{Z}\right))$; hence there is a map $$
\operatorname{Br}(\mathbf k(\mathbb P^2))[2] \to \bigoplus_{x\in (\mathbb P^2) ^{(1)}} H^1(\mathbf k(x),\mathbb{Z}/2).
$$
\end{comment}

This induces the following diagram over $U$:
 {\small
\begin{equation}\label{4.1 IOOV}\hspace{-0.5cm}
\xymatrix{
0 \ar[r] &\operatorname{Br}(U)[2]   \ar[r]^{\hspace{-1.2cm} \partial} \ar[d]^{\pi^*}& \displaystyle \bigoplus_{x\in (\mathbb P^2\setminus U) ^{(1)}} H^1(\mathbf k(x),\mathbb{Z}/2)  \ar[r]^{\hspace{0.4cm} \rho} \ar[d]^{\pi^*} &     \displaystyle \bigoplus_{P\in (\mathbb P^2\setminus U)^{(2)}} {\mathbb{Z}/2}  \ar[d]^{\pi^*}\\
\operatorname{Br}(S)[2] \ar@{^{(}->}[r]^{\hspace{-0.5cm} \iota} &\operatorname{Br}(\pi^{-1}(U))[2]  \ar[r]^{\hspace{-1 cm}  \partial} & \displaystyle  \bigoplus_{x\in (S\setminus \pi^{-1}U) ^{(1)}} H^1(\mathbf k(x),\mathbb{Z}/2)  \ar[r]^{\hspace{0.4cm} \rho }&  \displaystyle    \bigoplus_{P\in (S\setminus \pi^{-1}U)^{(2)}} {\mathbb{Z}/2} .}
\end{equation}}

To conclude that the Brauer class $\beta_U=\beta_{Q_G}$ depends only on $\mathcal L$, it suffices to prove that $\mathcal L$ determines the class $[\operatorname{Cl}_U]$ or, equivalently, its residues along all the curves in $\mathbb{P}^2$. Since the ramification locus of the quadric over $\mathbf k(\mathbb{P}^2)$ is contained  in $C\cup L$, only the residues along $C$ and $L$ matter. Moreover, it is not hard to see that the residue along $L$ is in fact determined by the residue along $C$.
Indeed, since $\operatorname{Cl}_U=\operatorname{Cl}(M)$ and the residue of $\operatorname{Cl}_U$ along the preimage of $L$ on $S$ is trivial (see \S \ref{brauer}), this residue is of the form $(\frac{f}{{\mathbf l}^{2d}})^i$, where $f$ is the equation of $C$ and ${\mathbf l}$ is the 
equation of a line (any line will give an equivalent residue) in $\PP^2$ and $i\in \mathbb{Z}$ (cf.~\cite[Proposition ~6.3]{IK}).
In order to find $i$ we use the description \cite[Theorem~1]{AM} of the map $\rho$ in (\ref{qwer}). At points $P\in C\cap L$ we have $\rho(h_C,h_L)=v_P(h_C)+v_P(h_L)$ where $h_C$ and $h_L$ are elements from $H^1(\mathbf k(C),\Z/2\Z)\simeq \mathbf k(C)^{\ast}/(\mathbf k(C)^{\ast})^2$ and $$H^1(\mathbf k(L),\Z/2\Z)\simeq \mathbf k(L)^{\ast}/(\mathbf k(L)^{\ast})^2$$ respectively. Hence, by (\ref{4.1 IOOV}) we deduce that the valuation of $\operatorname{Cl}(M)$ along $C\cap L$ determines the valuation along $L$. \footnote{Note that these valuations do not have to be equal. In fact, in the case when $\cc$ is a theta characteristic we shall see that $v_P(h_C)=1$
at a point $P\in L\cap C$ (see \cite[Proposition~6.3]{IOOV}).}

\subsubsection{ \bf Step 3. An explicit computation of the residue along $C$}\label{nres} Here we provide a  computation of the residue along $C$ which is based on a suggestion by an anonymous referee. In particular, we will prove that the residue of the class $\beta_U$ along $C$ is given by the restriction of the quadratic form $b$ to the curve $C$ or, equivalently, it is determined by $\mathcal L$.

Let $R\subset K=\mathbf k(\PP^2)$ be the local ring of the generic point of $C\subset \PP^2$.
The bilinear form $b \colon \G\times \G\to \oo(\delta)$ induced by $q_G$ gives a map of $K$-vector spaces $V\times V\to K$, where $V$ is the vector space over $K$ corresponding to $\G$. The latter map restricts to $R$-modules as $b_R\colon\ V\times V\to I$, where $I$ is the $R$-module corresponding to $\oo(\delta)$. Set $$V^{\#}=\{ v\in V\otimes_R K : b(v,V)\subset I\}.$$
By standard results, $b_R$ induces an embedding $V\to V^{\#}=V^{\vee}\otimes I$. Moreover, the restriction of $b_R$ to $V^{\#}/V$ is also a non-degenerate bilinear form on the torsion $R$-module $V^{\#}/V$.

Since $C$ is smooth and is defined scheme theoretically as the discriminant of $q_G$,  $V^{\#}/V$ is isomorphic to $R/\langle f\rangle $, where $f$ is a generator of the maximal ideal of $R$, i.e., a local equation of $C$. Moreover, $R/\langle f\rangle =\mathbf k(C)$ is the function field of $C$. By \cite[Chapter 6, Theorem 2.1]{Scha} there is an equivalence of categories
between non-degenerate bilinear forms over finite length torsion $R$-modules and non-degenerate bilinear forms over the residue field. By \cite[Chapter 6, Remark 2.5]{Scha}, the residue of $b_R$ along $C$ is the element $a\in \mathbf k(C)^{\ast}$ (defined up to $(\mathbf k(C)^{\ast})^2$) corresponding to the restricted bilinear form $b_C\colon \mathbf k(C)\to \mathbf k(C)$. Note that $b_C$  is determined by the quasi-half period $\mathcal L$ on $C$. Moreover, if we compare \cite[Chapter 6, Definition 2.5]{Scha} with \cite[Proposition 2.29]{IOOV}, we deduce that the residue of the quadratic form $b_R$ is equal to the residue of the  Clifford algebra $\operatorname{Cl}_U$ with $[\operatorname{Cl}_U]=\beta_U$. 

To this end, fix a symmetric $m\times m$ matrix $M$ of rational functions on $U$ which defines the restriction $q_G|_U$ and has non-zero principal minors. %Denote by $M$ the matrix obtained from $\bar M$ by clearing the denominators in each entry by means of multiplying it with a suitable power of the equation of the line $L$.
Let us denote by ${M}_{i}$ the determinant of the matrix obtained by taking the first $i$ rows and columns of $M$. Then we can assume that such a matrix has valuation $0$ along  $C$ for $i\leq m-1$ and  $\det M=\frac{f}{l^2d}$ has valuation $1$.
Hence, by \cite[Proposition 2.29]{IOOV} the residue $\partial_C(\operatorname{Cl}_U)=\partial_C(\operatorname{Cl}(M))$ can be described by the restriction of
$ M_{1\dots m-1}$ to the curve $C$, namely:
\begin{equation} \label{residue in C1}
\partial_C(\operatorname{Cl}(M_{m-1}))= {M}_{m-1}.%=\frac{M_{m-1}}{\mathbf l^{\deg M_{m-1}}}.
\end{equation}
On the other hand, in \cite[Chapter 6, Definition 2.5]{Scha} we have
\begin{equation} \label{residue in C2}
\partial_{2,C}(q_G)= \frac {l^{2d} M_m}{f{M}_{m-1}}.%=\frac{\mathbf l^{\deg M_{m-1}}}{M_{m-1}}.
\end{equation}
Therefore, $\partial_{2,C}(q_G)=(\partial_C(\operatorname{Cl}(M_{m-1})))^{-1}$; hence the two residues determine the same element in $\mathbf k(C)^*/(\mathbf k(C)^*)^2$.

\subsubsection{\bf Step 4. Conclusion: The Brauer class does not depend on the resolution of $\cc$} Summing up, $\beta_{Q_G}$ does not depend on the bundle $\mathcal{G}$ and the symmetric locally free resolution of $\mathcal{L}$, it only depends  on $[\cc]$. This finishes the proof of Theorem \ref{general IOOV}.\qed

\subsection{The proof of Theorem \ref{main}}\label{proof of main}
We already know from Theorem \ref{general IOOV} that the quadric bundles associated with two symmetric resolutions of sheaves related by a twist  induce the same Brauer class.
On the other hand, if two quadric surface bundles induce the same $2$-torsion Brauer class, the associated (as in Section \ref{brauer}) Azumaya algebras (over the same central simple algebra) of rank two on $S$ are Morita equivalent. As a consequence, they are isomorphic over an open set $U\subset S$ (see \cite[\S 6]{IK}). Theorem 1 in \cite{APS} implies that the associated quadric bundles are isomorphic over an open subset of $\PP^2$; hence {\color{black} their generic fibers over $\mathbb{P}^2$ are isomorphic.}
\qed

In fact, the following more precise statement relating the Brauer class and the quadric bundle, was suggested to the authors by one of the referees. 
\begin{prop} \label{birationality for quadrics} Let $\mathbf k$ be any field of characteristic different from 2.
Two smooth quadric surfaces $Q_1$ and $Q_2$ over $\mathbf k$ are $\mathbf k$-birational to each other if and only if one of the following holds:
\begin{itemize}
    \item[i)] $Q_1$ and $Q_2$ both have a rational point;
    \item[ii)] none of $Q_1$ and $Q_2$ has a rational point, $\operatorname{disc}(Q_1)=\operatorname{disc}(Q_2) =\Delta \in \mathbf k^*/(\mathbf k^*)^{2}$ and in 
$\operatorname{Br}(\mathbf k(\sqrt{\Delta}))$ the associated Brauer classes $\alpha_1$ and $\alpha_2$ coincide.
\end{itemize}

 Furthermore, if the discriminants and Brauer classes associated to $Q_1$ and $Q_2$ are equal then $Q_1$ and $Q_2$ are isomorphic.\end{prop}

\begin{proof} 
A smooth quadric $Q \subset \PP^3_{\mathbf k}$ is defined by a non-degenerate quadratic form $q$ in four variables. To $q$ we associate the discriminant $\Delta \in  \mathbf k^*/( \mathbf k^*)^{2}$ that gives a trivial or a quadratic extension $K={\mathbf k}(\sqrt{\Delta})$ of $\mathbf k$. The quadric $Q$ also defines a quaternion class $\alpha$ in the Brauer group $Br(K)$. By \cite[Theorem 2.5]{CS}, if $C/K$ is the conic associated to $\alpha$ we have two possibilities:
\begin{itemize}
\item $\Delta=1$ (hence $K=\mathbf k$) and $Q = C \times_{\mathbf k} C $
\item $\Delta \neq 1$ and $Q = \operatorname R_{K/\mathbf k}(C)$, where $\operatorname R_{K/\mathbf k}$ is the Weil restriction of scalars. 
\end{itemize}

Hence, if we have two quadrics $Q_1$, $Q_2$ with the same discriminant $\Delta_1=\Delta_2=\Delta \in \mathbf  k^*/(\mathbf k^*)^{2}$ and associated Brauer class $\alpha_1=\alpha_2=\alpha \in  \operatorname{Br}(\mathbf k(\sqrt{\Delta}))$, then $Q_1=Q_2$. This is a proof of the last assertion of the proposition, but it also gives the ``if'' part of the first assertion in the case both quadrics have no $\mathbf k$-rational points. 

To deal with the case when both quadrics have a $\mathbf k$-rational point, recall that a quadric $Q$ has a rational point ($Q(\mathbf k)=\emptyset$) if and only if it is $\mathbf k$-rational, i.e., $\mathbf k$-birational to projective space. Therefore, if both $Q_1$ and $Q_2$ have a $\mathbf k$-rational, each of them is ${\mathbf k}$-birational to $\PP^2_{\mathbf k}$; hence they are $\mathbf k$-birational to each other. This concludes the ``if" part of the proof. 

For the ``only if" part, let  $Q_1$ and $Q_2$ be two smooth quadric surfaces which are $\mathbf k$-birational to each other. Clearly, $Q_1$ has a $\mathbf k$-rational point if and only if $Q_2$ does. Now, assume we have two quadrics $Q_1$ and $Q_2$ such that the sets of rational points $Q_1(\mathbf k)$ and $Q_2(\mathbf k)$ are empty. %We will prove that in that case we also have $Q_1=Q_2$ which will imply in particular the ``only if'' part of the proposition. 

For $i=\{1,2\}$ denote by $F_i := \mathbf k(Q_i)$ the function field of $Q_i$. Let $\Delta_i$ be the discriminant o $Q_i$ and $K_i$ the corresponding quadratic extension. By \cite[Theorem 3.1]{ACP}, the kernel of the restriction map $\operatorname {Br}(\mathbf k)\to \operatorname{Br}(F_i)$ is zero, unless $Q_i(\mathbf k) = \emptyset$, and $\Delta = 1$. In this case, the kernel is isomorphic to $\mathbb{Z}_2$ and it is spanned by the class $\alpha_i \in \operatorname{Br}(\mathbf k)$, which is non-zero.

Since $Q_1$ and $Q_2$ are $\mathbf k$-birational to each other, we have
$$B:=\operatorname{ker}[ \operatorname{Br}(\mathbf k)\to \operatorname{Br}(F_1)] = \operatorname{ker}[ \operatorname{Br}(\mathbf k) \to \operatorname{Br}(F_2)].$$

If $B\neq 0$, then $\Delta_1 =1$ and $\Delta_2 =1$ and the group $B$ is spanned by the unique class $\alpha=\alpha_1=\alpha_2\in \operatorname{Br}(\mathbf k)$. %attached to a quaternion algebra. , defining a conic $C$ over $k$, then $Q_1=C\times_k C =Q_2$.

If $B=0$, then $\Delta_1\neq 1$ and $\Delta_2\neq 1$. Suppose $\Delta_1\neq \Delta_2 \in  \mathbf k^*/( \mathbf k^*)^{2}$. Let us work over to the field $K_1$. Since $Q_1(\mathbf k)=Q_2(\mathbf k)=\emptyset$ we have $Q_1(K_1) = \emptyset$ and   $Q_2(K_1) = \emptyset$.   By \cite[Theorem 3.1]{ACP}, the
kernel of $\operatorname{Br}(K_1) \to \operatorname{Br}(K_1(Q_1))$ is isomorphic to $\mathbb{Z}_2$, whereas the kernel of $\operatorname{Br}(K_1) \to \operatorname{Br}(K_1(Q_2))$ is trivial (since the discriminant $\Delta_{2,K_1}$ of $Q_2$ over $K_1$ is not  $1$); this is however a contradiction, as $Q_1$ and $Q_2$ are birational over $\mathbf k$ and, accordingly, also over $K_1$.

Next assume $B = 0$ and $\Delta_1 = \Delta_2 \in \mathbf  k^*/(\mathbf  k^*)^{2}$ and this element is not $1$. Let $K:=K_1=K_2$ be the corresponding 
quadratic extension of $\mathbf k$. Since $Q_1$ is $\mathbf k$-birational to $Q_2$, this also holds over $K$. As a consequence, we have $K(Q_1)=K(Q_2)$ and the corresponding discriminants of $Q_1$ and $Q_2$ over $K$ are $\Delta_K=1\in K^*/(K^*)^2$. Therefore, again by \cite[Theorem 3.1]{ACP}, we have $\alpha_1 = \alpha_2\in \operatorname{Br}(K)$, as they are both equal to the generator of $ \operatorname{ker} [\operatorname{Br}(K) \to \operatorname{Br}(K(Q_2))]$.
% Thus $C_1=C_2$ over $K$, and finally $Q_1=R_{K|k}(C_1) = R_{K|k}(C_2)=Q_2$.
\end{proof}

Proposition \ref{birationality for quadrics} {\color{black} allows us} to deal with rationality questions for quadric bundles over open subsets of $\mathbb{P}^2$. Indeed, to a quadric bundle $Q$ over an open subset of $\mathbb{P}^2$ we can always associate a quadric over the function field of $\mathbb{P}^2$, {\color{black} by base change over the generic point}. In this way $Q$ has a well defined discriminant, discriminant double cover and an associated Brauer (see \cite{APS} for more details). Using this terminology we can now formulate the following corollary which is a straightforward consequence of Proposition \ref{birationality for quadrics}.  

\begin{cor}\label{birational quadric bundle}  Let $\mathbf k$ be any field of characteristic different from 2. Let $Q_1$ and $Q_2$ be two quadric surface bundles over some open subset of $\mathbb{P}^2_{\mathbf k}$. Assume they induce the same discriminant double cover and corresponding Brauer class. Then $Q_1$ and $Q_2$ are $\mathbf k$-birational.
\end{cor}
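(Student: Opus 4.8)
The plan is to show that Corollary~\ref{birational quadric bundle} follows from the same chain of reasoning used in the proof of Theorem~\ref{main}, abstracting away from the assumption that the quadric bundles come from locally free resolutions of a fixed sheaf $\cc$. The crucial observation is that the birationality argument never really used the resolution itself: it used only the Brauer class on the discriminant double cover. So the first step is to recall that for a quadric surface bundle $Q_i \to \PP^2$ (relative dimension $2$, $\rk = 4$), the relative Hilbert scheme of lines in the fibers $\operatorname{Hilb}(Q_i) \to S$ is a $\PP^1$-fibration over the discriminant double cover $S$, and its gluing data defines a Brauer class $\beta_{Q_i} \in Br(S)[2]$. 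By the hypothesis of the corollary, $Q_1$ and $Q_2$ induce the same double cover $S$ and the same class $\beta_{Q_1} = \beta_{Q_2}$ on it.

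Next I would pass to the generic point: restrict $\operatorname{Hilb}(Q_i) \to S$ to the generic fiber over the function field $\mathbb{C}(S)$. Each such generic fiber is a one-dimensional Brauer--Severi variety over $\mathbb{C}(S)$, and as recalled in the proof of Theorem~\ref{main} (via \cite[\S 3]{AM}, \cite{GS,Ko}), its class in $Br(\mathbb{C}(S))$ is precisely the image of $\beta_{Q_i}$ under $Br(S)[2] \hookrightarrow Br(\mathbb{C}(S))[2]$. Since $\beta_{Q_1} = \beta_{Q_2}$ by assumption, these two Brauer--Severi varieties determine the same element of $Br(\mathbb{C}(S))$. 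Invoking Theorem~\ref{R} (the Amitsur-type result of \cite{R} for relative dimension~$1$), the two Brauer--Severi varieties are birational over $\mathbb{C}(S)$, hence $\operatorname{Hilb}(Q_1)$ and $\operatorname{Hilb}(Q_2)$ are birational over $S$.

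Finally I would reconstruct the birationality of $Q_1$ and $Q_2$ themselves from that of their Hilbert schemes of lines, exactly as in the last sentence of the proof of Theorem~\ref{main}: a general point of a quadric surface lies on exactly two lines, one from each ruling, and conversely a generic pair of lines from different rulings meet in a single point. Thus a birational identification of the $\PP^1$-fibrations parametrising lines yields a birational identification of the total spaces $Q_1 \dashrightarrow Q_2$ over $\mathbb{P}^2$ (or over the given open subset).

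The only genuine subtlety, and the step I would be most careful about, is that the corollary is stated over \emph{some open subset} of $\mathbb{P}^2$ rather than all of $\PP^2$, and without the hypothesis that $Q_1, Q_2$ arise from symmetric locally free resolutions of a common sheaf. I would therefore emphasise that the entire argument is birational in nature and takes place over the function field $\mathbb{C}(S)$, so shrinking to a common open subset where both bundles are defined and the discriminant is smooth loses nothing; the identification of the Brauer class of the generic Severi--Brauer fiber with $\beta_{Q_i}$ and the application of Theorem~\ref{R} depend only on the generic point. This is what makes the statement flexible enough to be combined later with quadric reductions, where one repeatedly replaces a quadric bundle by a birational one defined only over an open locus.
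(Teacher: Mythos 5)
Your proposal is correct and follows exactly the paper's own route: the paper derives this corollary by observing that the proof of Theorem~\ref{main} --- identifying the generic fibers of the Brauer--Severi schemes $\operatorname{Hilb}(Q_i)\to S$ with classes in $Br(\mathbb{C}(S))$, applying Theorem~\ref{R}, and recovering points of the quadrics as intersections of lines from the two rulings --- never uses the symmetric resolutions, only the discriminant double cover and the Brauer class. Your added remark about the argument being purely generic (hence insensitive to shrinking to an open subset) is exactly the point that makes the corollary usable with quadric reductions later in the paper.
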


\begin{rem} We also have an alternative more geometric argument to deduce Theorem \ref{main} from Theorem \ref{general IOOV}. It works when the base field is $\mathbb C$.
{\color{black}Indeed, recall from \cite[Prop.~7.1.6]{CS1} that 
% \label{R} 
two Severi-Brauer varieties over a field $\mathbf{k}$ of the same dimension are isomorphic if and only if they determine the same Brauer class.}

%two Brauer--Severi varieties of dimension $1$ over a field $\mathbf k$ that give the same element in $\operatorname{Br}(\mathbf k)$ are birational.

Now, in order to deduce Theorem \ref{main} from Theorem \ref{general IOOV}, let us consider the generic fibers of the associated Brauer--Severi schemes $\operatorname{Hilb} (Q_F)\to S \ \ \text{and}\ \ \operatorname{Hilb} (Q_G)\to S$. The fiber over the generic point $\mathbb{C}(S)$ of the Brauer--Severi scheme is a Brauer--Severi variety over $\mathbb{C}(S)$ (it becomes $\PP^n$ after tensoring by
the algebraic closure $\overline{\mathbb{C}(S)}$). {\color{black}Using  injectivity of the restriction to the generic fiber 
the Brauer classes associated with $F$ and $G$ in $\operatorname{Br}(S)[2]$ can be identified with their restrction to their generic fibers which are the Brauer classes in $\operatorname{Br}(\mathbb{C}(S))$ associated to the Brauer--Severi varieties} (see, for example, {\color{black}\cite[Corr.~1.10]{Gr3},} \cite[\S 3]{AM}, \cite{Ko}).
The corresponding Brauer--Severi varieties are hence {\color{black}isomorphic}. In that case  the quadric bundles have isomorphic generic fibers.
Indeed, {\color{black}in order to see it, it suffices to observe that a point on a quadric bundle} is determined by the intersection of two lines (from different rulings). 
\end{rem}

\section{Free symmetric resolutions and quadric reductions}\label{ex}
Our aim in this section is to provide tools for the construction of symmetric locally free resolutions, involving bundles of rank four, of a quasi half-period $\mathcal L$ on a  general curve  $C$ of fixed even degree $2d$. For these purposes, we
recall the operation of quadric reduction on a quadric bundle \cite{ABB}. Here we adapt it to our context.  

\subsubsection*{\bf Quadric reduction} Let ${Q}$ be a quadric bundle, which is associated with 
a symmetric map $\varphi \colon \mathcal{T}(-\delta)\to \mathcal{T}^{\vee}.$ Let $\mathcal{U}$ be a regular isotropic subbundle of $\mathcal{T}(-\delta)$, i.e., such that  for  the inclusion map $\iota\colon\mathcal{U}\to \mathcal{T}(-\delta)$ we have $\iota^{\vee} \circ \varphi \circ  \iota=0$ and $\mathbb{P}(\mathcal{U}^{\vee})$ does not meet the singular locus of any fiber of ${Q}$.
Define the subbundle $\mathcal{U}^{\perp}:= \ker( \iota^{\vee} \circ \varphi)$. Denote by ${Q}'$ the quadric fibration associated with $ \varphi'\colon \mathcal{U}^{\perp}/\mathcal{U} \to (\mathcal{U}^{\perp}/\mathcal{U})^{\vee}(\delta)$ which is obtained by restricting $\varphi$ to  $\mathcal{U}^{\perp}$ and by taking the well-defined quotient map, as ${\mathcal U}$ is an isotropic sub-bundle. Then ${Q}'\subset \mathbb{P}((\mathcal{U}^{\perp}/\mathcal{U})^{\vee})$ will be called {\em the quadric reduction} of ${Q}$ with respect to the projective subbundle $\mathbb{P}(\mathcal{U}^{\vee})$ (see \cite[Definition 1.13]{ABB}). 

%If $r_1$ and $r_2$ are the quadric reductions with respect to the bundles $\mathbb{P}(\mathcal{U}^{\vee})$ and $\mathbb{P}(\mathcal{U'}^{\vee})$, we get quadric bundles in the projective bundles constructed as $\mathbb{P}(({\mathcal U}_i^{\perp}/{\mathcal U}_i)^{\vee})$ for $i=1,2$.

\begin{rem} A special case of quadric reduction is given by sub-bundles ${\mathcal U}$ of rank one. Then the corresponding quadric bundle admits a section $s$ and ${ Q}'$ is associated to the quadratic form induced by ${Q}$ on the projectivization of the relative tangent bundle of ${ Q}$ along the section $s$.
\end{rem}

\begin{prop}[{\cite[Theorem~1.27]{ABB}}]\label{prop quadric reduction brauer class}  The quadric bundle ${Q}$ and its quadric reduction $Q_{\mathcal U}$ with respect to any regular isotropic subbundle $\mathcal U$ admit the same discriminant locus and induce the same Brauer class on the discriminant double cover.
\end{prop}
\begin{rem} Theorem \ref{general IOOV} provides a straightforward alternative proof of Proposition \ref{prop quadric reduction brauer class} when the discriminant curve is smooth and the quadric bundles have even relative dimension. 
\end{rem}

\begin{rem} \label{reduction to rank 4} For every quadric bundle of rank greater than $4$ over $\mathbb{P}^2$ there is always an isotropic section that is regular over a dense open subset of $\mathbb{P}^2$. The corresponding quadric reduction leads to a quadric bundle of lower rank over that open subset or a so-called weak quadric bundle (where we allow the quadratic form to be 0 on some fibers) over $\mathbb{P}^2$. Thus, having any quadric bundle $Q$ of even rank larger than 4  we can perform quadric reductions of $Q$ with respect to its isotropic section and iterate this process until we get a quadric surface bundle over some dense open subset of $\mathbb{P}^2$. These (weak) quadric surface bundles {\color{black} will all be isomorphic at the generic point of $\mathbb{P}^2$} by Corollary \ref{birational quadric bundle}. 
\end{rem}

We now have a tool to construct quadric bundles of lower dimension from quadric bundles of higher dimension without changing the associated cokernel $[\cc]$.  For the construction of symmetric locally free resolutions involving bundles of rank four, we will hence look for such resolutions in some even rank $\geq 4$. First, it is natural to investigate free resolutions.    
We  proceed to the classification of quadric bundles associated with free resolutions.
% This will be based on two propositions.
\begin{prop} \label{split resolution} Let $C$ be a general smooth plane curve and  $\cc$ a quasi half-period on it. Let 
$0\to \mathcal T(-\delta)\xrightarrow{\phi} \mathcal T^{\vee}\to \cc \to 0$
be a free symmetric resolution of  $\cc$ such that $\Hom(\mathcal T^{\vee},\mathcal T(-\delta))=0$. Then there exists a free symmetric resolution  $$0\to \mathcal A(-\delta)\xrightarrow{\psi} \mathcal A^{\vee}\to \cc \to 0$$ if and only if 
\begin{equation}\label{decomposition of A}
\mathcal A=\mathcal T\oplus \mathcal{O}(\frac{\delta }{2})^l \oplus  \bigoplus_{j=1}^r ({\mathcal O}(k_j)\oplus {\mathcal O}(\delta-k_j) )
\end{equation}
 for some non-negative integers $l, k_j\geq 0$ such that $l=0$ for $\delta$ odd. Moreover, if $l$ is even, the  associated quadric fibrations $Q_{\phi}$ and $Q_{\psi}$ are related by a quadric reduction. Conversely, a generic symmetric map $\widetilde \psi\colon \mathcal A(-\delta)\rightarrow \mathcal A^{\vee}$ induces a quadric fibration $Q_{\widetilde \psi}$ which admits a quadric reduction  to some quadric fibration $Q_{\widetilde \phi}$ associated with a map $\widetilde \phi\colon\mathcal T(-\delta)\to \mathcal T^{\vee}$.
 \end{prop}
\begin{proof}
Clearly, if $\mathcal A$ is as in  (\ref{decomposition of A}),  i.e. $\mathcal A=\mathcal T\oplus \mathcal A'$ for $$\mathcal A':=\mathcal{O}(\frac{\delta }{2})^l \oplus  \bigoplus_{j=1}^r ({\mathcal O}(k_j)\oplus {\mathcal O}(\delta-k_j) ),$$ then we can take $\psi=\phi\oplus \operatorname{id}$.

To prove the other implication, take the resolution $0\to \mathcal A(-\delta)\xrightarrow{\psi} \mathcal A^{\vee}\to \cc \to 0$ with $\mathcal A$ being a split bundle. We have a commutative diagram 
$$
\begin{tikzcd}
0 \ar[r] & \mathcal A(-\delta) \ar[r,"\psi"] \ar[d,shift left=0.5ex] & \mathcal A^{\vee} \ar[r] \ar[d, shift left=0.5ex, "\pi_{\mathcal T}"] & \cc \ar[r] \ar[d, equal]& 0\\
0  \ar[r] &  \mathcal T(-\delta)\ar[r,"\phi" ]\ar[u,shift left=0.5ex]& \ar[r]  \mathcal T^{\vee}\ar[u,shift left=0.5ex,"\iota_{\mathcal A}"]  \ar[r,"s"] & \cc \ar[r]& 0 
\end{tikzcd}
$$
Then the composition of $\mathrm{id}-\pi_{\mathcal T}\circ \iota_{\mathcal A}\colon\mathcal T^{\vee}\to \mathcal T^{\vee}$ with $s$ is trivial, as the diagram is commutative. Hence it induces a homomorphism  $ \mathcal T^{\vee}\to \mathcal T(-\delta)$, which is trivial under the assumption $\operatorname{Hom}(\mathcal T^{\vee}, \mathcal T(-\delta))=0.$ Therefore, $\mathcal T^{\vee}$ must be a component of $\mathcal A^{\vee}$. Hence $\mathcal A=\mathcal T\oplus \mathcal A'$ with $A'$ and $\pi_{\mathcal T}$ is the projection $\mathcal A^{\vee}\to \mathcal T^{\vee}$. Furthermore, the map
 $\pi_{\mathcal T} \circ \psi$  has the same rank as $\phi$ at every point. In particular, since $C$ is irreducible, all maximal minors of the matrix associated with $\pi_{\mathcal T}\circ \psi$ are divisible by $\det \phi$. Therefore, a base change in $\mathcal A$ allows us to assume that  the matrix associated to $\psi$ is a diagonal block matrix with diagonal blocks associated to elements of the matrices $\operatorname{Sym}^2(\mathcal T^{\vee})(\delta)\ \  \text{and} \ \ \operatorname{Sym}^2(\mathcal A'^{\vee})(\delta)$. Under this assumption,  the map $\psi$ induces an isomorphism between $\mathcal A'(-\delta)$ and $\mathcal A'^{\vee}$, which gives the assertion.  Indeed, for every line bundle component  $\mathcal O(-k_i)$ of $\mathcal A'(-\delta)$ (i.e.~a component $\mathcal O(\delta-k_i)$ of $\mathcal A'$) the isomorphism associates a component of $\mathcal A'^{\vee}$ isomorphic to it  and this correspondence is a bijection. Hence such a component of $\mathcal A'^{\vee}$ must come from a component of the form $\mathcal O(k_i)$ in $\mathcal A'$.
 To prove the assertion about the quadric reduction, it is enough to observe that another base change transforms $\psi$ into a matrix consisting of two diagonal blocks of size $l\times l$ and $2r\times 2r$; both of them can be put in anti-diagonal form with $1$ on the anti-diagonal. If $l$ is even, we can easily see a rank $l+r$ regular isotropic subbundle of $\mathcal A$, with respect to which the quadric reduction of $Q_{\psi}$ is $Q_{\phi}$. 
The same argument is valid for any map $\widetilde \psi \colon \mathcal A(-\delta)\rightarrow \mathcal A^{\vee}$.
\end{proof}

\subsection{\bf Symmetric free resolutions of quasi half-periods} \label{free resolutions}

{\color{black} Applying Proposition \ref{split resolution} to the minimal free resolutions of general quasi-half periods found in \cite{B} and \cite[Proposition 4.2 and Proposition 4.6 ]{C},} we obtain a classification of split symmetric resolutions of quasi half-periods on general smooth plane curves of degree $2d$. Indeed, we have the following characterization of split  bundles $\mathcal A$ involved in symmetric resolutions of quasi half-periods $\cc$,  namely 
\begin{equation}\label{resolution A} 0\to \mathcal A(-\delta)\to \mathcal A^{\vee}\to \cc\to 0.\end{equation}
Note first that, if $\cc$ is a quasi-period on a general curve $C$, there exists $\gamma\in \mathbb Z$ such that $\cc(\gamma)$ is trivial, a half-period, an even or an odd theta characteristic. Twisting the resolution \ref{resolution A} by $\mathcal O_{\mathbb P^2}(\gamma)$ we obtain 
\begin{equation}\label{twisted resolution A} 0\to \mathcal A_{\gamma}(-\delta_{\gamma})\to \mathcal A_{\gamma}^{\vee}\to \cc(\gamma)\to 0,\end{equation}
with $\mathcal A_{\gamma}=\mathcal A(-\gamma)$ and $\delta_{\gamma}=\delta-2\gamma$. Hence we reduced our classification of symmetric free resolutions of non trivial quasi-half periods to the three cases listed below.

\begin{description}
\item[Half-periods] If $\cc(-d)$ is a half-period then $\delta=0$ and we have
 \begin{equation}\label{eq half period}\mathcal A=\mathcal{O}(-1)^d\oplus  \mathcal{O}^l \oplus  \bigoplus_{j=1}^r ({\mathcal O}(k_j)\oplus {\mathcal O}(-k_j) ) \end{equation} for some non-negative integers $l, k_j\geq 0$.

\item[Even theta characteristics] If $\cc(-2d)$ is an even theta characteristic, then $\delta=1$ and we have
 \begin{equation}\label{eq even theta} \mathcal A=\mathcal{O}^{2d} \oplus  \bigoplus_{j=1}^r ({\mathcal O}(k_j)\oplus {\mathcal O}(1-k_j) ) \end{equation} for some non-negative integers $ k_j\geq 0$.
\item[Odd theta characteristics] If $\cc(-2d)$ is an odd theta characteristic then $\delta=1$ and we have
 \begin{equation}\label{eq odd theta} \mathcal A=\mathcal{O}^{2d-3}\oplus \mathcal{O}(-1) \oplus  \bigoplus_{j=1}^r ({\mathcal O}(k_j)\oplus {\mathcal O}(1-k_j) ) \end{equation} for some non-negative integers $k_j\geq 0$.
\end{description}

\subsection{Quadric bundles via symmetric free resolutions}
Recall that every bundle $\mathcal T^{\vee}$ on $\mathbb{P}^2$ admits a free resolution of length two
$0\to \mathcal B\to \mathcal A \to\mathcal{ T}^{\vee}\to 0.$ In this section, we consider quadric bundles given by sections of $\operatorname{Sym}^2 \mathcal T^{\vee}(\delta)$, for $\delta$ satisfying the additional assumption $H^2(\bigwedge^2 \mathcal B(\delta))=0$. 
We prove that such quadric bundles are the quadric reductions of quadric bundles given by  
$(\operatorname{Sym}^2 (\mathcal A\oplus \mathcal B^{\vee}(-\delta)))(\delta)$. Furthermore, the latter quadric bundles need to be included in the classification of Section \ref{free resolutions}. This provides us a convenient way to construct and  investigate many quadric bundles. 
 
\begin{prop}\label{quadric reduction} Let  $\mathcal{T}$ be a vector bundle equipped with a symmetric map $$\varphi\colon \mathcal{T}(-\delta)\to \mathcal{T}^{\vee}$$ inducing a quadric bundle ${Q}$. Let
$0\to \mathcal B\to \mathcal A \to\mathcal{ T}^{\vee}\to 0,$
be a length two resolution of $\mathcal{ T}^{\vee}$ with $\mathcal A$ and $\mathcal B$ split. Assume that $H^2(\bigwedge^2 \mathcal B(\delta))=0$.  Then there exists a symmetric map $\psi\colon \mathcal A^{\vee}(-\delta)\oplus \mathcal B\to \mathcal A\oplus \mathcal B^{\vee}(-\delta)$ which induces a quadric bundle $\mathcal{P}$ such that ${Q}$ is obtained from 
$\mathcal{P}$ by quadric reduction.
\end{prop}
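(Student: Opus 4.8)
The plan is to construct $\psi$ completely explicitly from the given resolution, to recover $\mathcal{Q}$ as the quadric reduction of $\mathcal{P}$ along the obvious isotropic subbundle $B$, and to isolate the only nontrivial input: a cohomological lifting that is governed exactly by the hypothesis $H^2(\bigwedge^2 B(\delta))=0$. First I would write $\psi$ in block form with respect to the splittings of source and target as
\[
\psi=\begin{pmatrix} \alpha & \beta\\ \beta^{\vee}(-\delta) & 0\end{pmatrix}\colon A^{\vee}(-\delta)\oplus B\to A\oplus B^{\vee}(-\delta),
\]
where $\beta\colon B\to A$ is the inclusion from the resolution $0\to B\xrightarrow{\beta} A\xrightarrow{p}\mathcal{T}^{\vee}\to 0$, the lower-right block is $0$, and $\alpha\colon A^{\vee}(-\delta)\to A$ is a symmetric map to be chosen. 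Since the two off-diagonal blocks are mutually dual and both diagonal blocks are symmetric, $\psi$ is symmetric and defines a quadric fibration $\mathcal{P}$.

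Next I would exhibit the reduction. Because $\mathcal{T}^{\vee}$ is locally free, dualising and twisting the resolution gives $0\to\mathcal{T}(-\delta)\xrightarrow{\iota} A^{\vee}(-\delta)\xrightarrow{\beta^{\vee}(-\delta)} B^{\vee}(-\delta)\to 0$. Taking $\mathcal{U}=B$ (the second summand of the source), the vanishing lower-right block makes $B$ isotropic, and a direct computation of $\mathcal{U}^{\perp}=\ker(\iota^{\vee}\circ\psi)$ identifies it with $\mathcal{T}(-\delta)\oplus B$, so that $\mathcal{U}^{\perp}/\mathcal{U}\cong\mathcal{T}(-\delta)$. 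Reading off the form induced on the quotient shows that the reduced map is the compression $p\circ\alpha\circ\iota=\Sym^2(p)(\alpha)$. Hence the whole problem reduces to choosing $\alpha$ so that $\Sym^2(p)(\alpha)=\varphi$, i.e.\ to lifting $\varphi\in H^0(\Sym^2(\mathcal{T}^{\vee})(\delta))$ along the surjection $\Sym^2(p)\colon\Sym^2 A\to\Sym^2\mathcal{T}^{\vee}$.

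This lifting is the crux. The obstruction to lifting $\varphi$ lies in $H^1(F^1(\delta))$, where $F^1=\ker(\Sym^2 A\to\Sym^2\mathcal{T}^{\vee})$. The key observation is that the multiplication map $A\otimes B\to\Sym^2 A$, $a\otimes b\mapsto a\cdot b$, has image exactly $F^1$ (its cokernel is $\Sym^2(A/B)=\Sym^2\mathcal{T}^{\vee}$) and kernel exactly $\bigwedge^2 B$ (checked either on a local splitting $A=B\oplus\mathcal{T}^{\vee}$ or by the rank count $rk(A\otimes B)-\binom{rk\,B}{2}=rk\,F^1$). This yields a short exact sequence
\[
0\to\bigwedge\nolimits^2 B\to A\otimes B\to F^1\to 0.
\]
Since $A$ and $B$ are split, $A\otimes B$ is a direct sum of line bundles, so $H^1(A\otimes B(\delta))=0$ on $\mathbb{P}^2$; twisting by $\delta$ and taking cohomology therefore gives an injection $H^1(F^1(\delta))\hookrightarrow H^2(\bigwedge^2 B(\delta))$. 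The hypothesis $H^2(\bigwedge^2 B(\delta))=0$ then forces $H^1(F^1(\delta))=0$, so a symmetric lift $\alpha$ of $\varphi$ exists and $\mathcal{Q}$ is the reduction of $\mathcal{P}$.

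Finally I would verify that $\mathcal{U}=B$ is a regular isotropic subbundle and that $\psi$ is generically nondegenerate, so that $\mathcal{P}$ is a genuine quadric fibration with the same discriminant $C$ as $\mathcal{Q}$. Both are automatic from a fibrewise computation: using that $\beta_x$ is injective, a vector $(u,v)$ lies in the radical of $\psi_x$ precisely when $u\in\mathrm{rad}(\varphi_x)\subset\mathcal{T}(-\delta)_x$ and $v=-\beta_x^{-1}\alpha_x u$, so the projection to $\mathcal{T}(-\delta)_x$ carries $\mathrm{rad}(\psi_x)$ isomorphically onto $\mathrm{rad}(\varphi_x)$. In particular the radical meets $B_x$ trivially at every point (giving regularity) and vanishes wherever $\varphi_x$ is nondegenerate (giving generic nondegeneracy and discriminant $C$). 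The main obstacle is the lifting step of the third paragraph: the essential point is to set the lifting problem up symmetrically through $\Sym^2(p)$ and then to recognise the exact sequence above, so that the split bundle $A\otimes B$ absorbs everything and the obstruction is controlled by the single antisymmetric term $\bigwedge^2 B$ appearing in the hypothesis.
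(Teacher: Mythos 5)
Your proof is correct and follows essentially the same route as the paper: the block form $\bigl(\begin{smallmatrix}\alpha & \beta\\ \beta^{\vee} & 0\end{smallmatrix}\bigr)$ is exactly the paper's section with identity component in $H^0(B\otimes B^{\vee})$ and vanishing remaining components, and your lifting step uses the same exact sequence $0\to\bigwedge^2 B\to A\otimes B\to \Sym^2 A\to\Sym^2\mathcal{T}^{\vee}\to 0$ together with $H^2(\bigwedge^2 B(\delta))=0$. Your version is somewhat more explicit about the obstruction group $H^1(F^1(\delta))$ and about the fibrewise verification of regularity, but the underlying argument is the same.
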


\begin{proof} {\color{black} For the proof, we find an appropriate  decomposition of the space of sections $H^0(\Sym^2( \mathcal A\oplus  \mathcal B^{\vee}(-\delta))(\delta))$ that allows us to define $\psi$. 
First, we have $$H^0(\Sym^2( \mathcal A\oplus  \mathcal B^{\vee}(-\delta))(\delta))=H^0(\Sym^2  \mathcal A(\delta))\oplus H^0( \mathcal A\otimes \mathcal  B^{\vee})\oplus H^0(\Sym^2  \mathcal B^{\vee}(-\delta)).$$

Then, from the vanishing  $H^2(\bigwedge^2 \mathcal B(\delta))=0$}, the short exact sequence $$0\to \textstyle \bigwedge^2 \mathcal B\to \mathcal A\otimes \mathcal B\to \Sym^2  \mathcal A\to \Sym^2 \mathcal T^{\vee}\to 0$$ gives a decomposition  $$H^0(\Sym^2  \mathcal A(\delta))=H^0(\Sym^2 \mathcal T^{\vee}(\delta))\oplus H^0(( \mathcal A\otimes  \mathcal B(\delta))/\textstyle \bigwedge^2  \mathcal B(\delta)).$$  Additionally, by the cohomology exact sequence associated with the resolution of the bundle $\mathcal T^{\vee} \otimes  \mathcal B^{\vee}(-\delta)$ and the vanishing $H^1( \mathcal B\otimes  \mathcal B^{\vee})=0$ (as  $ \mathcal B\otimes  \mathcal B^{\vee}$ is split), we have 
\begin{equation}H^0( \mathcal A\otimes  \mathcal B^{\vee})=
 H^0( \mathcal B\otimes  \mathcal B^{\vee})\oplus H^0(\mathcal T^{\vee}\otimes  \mathcal B^{\vee}).\end{equation}

Finally, we get 
 \begin{eqnarray}
\label{decomposition of sym A+B} 
H^0(\Sym^2( \mathcal A\oplus  \mathcal B^{\vee}(-\delta))(\delta))&=&
H^0(\Sym^2 \mathcal T^{\vee}(\delta))\oplus H^0( \mathcal B\otimes  \mathcal B^{\vee}) \oplus H^0(\mathcal T^{\vee}\otimes  \mathcal B^{\vee}) \oplus \nonumber \\ &\oplus & H^0(( \mathcal A\otimes  \mathcal B(\delta))/\textstyle \bigwedge^2  \mathcal B(\delta))\oplus H^0(\Sym^2  \mathcal B^{\vee}(-\delta)).
\end{eqnarray}
{\color{black} If we now denote by  $\bar {\varphi}$ the element of $H^0(\Sym^2 \mathcal T^{\vee}(\delta))$ corresponding to the symmetric map $\varphi$, we can define a symmetric map 
 $\psi$ corresponding to an element of $H^0(\Sym^2( \mathcal A\oplus  \mathcal B^{\vee}(-\delta))(\delta))$ obtained by adding to $\bar{\varphi}$ the identity on $H^0( \mathcal B\otimes  \mathcal B^{\vee})$ and taking the remaining components to be zero.} In this way, we get a quadric bundle $\mathcal P$
 for which  $ \mathcal B^{\vee}(-\delta)$ is regular isotropic and the associated quadric reduction is exactly $Q$.  
\end{proof}

%The purpose of Lemma \ref{quadric reduction of general map} is to construct quadric reductions from quadric bundles of high rank in order to obtain quadric surface bundles associated with the same Brauer class.  

\begin{lem}\label{quadric reduction of general map} Let $ \mathcal A$, $ \mathcal B$ be split bundles. Let $\delta\in \mathbb{Z}$ be such that $H^2(\bigwedge^2  \mathcal B(\delta))=0$. Assume that there exists an embedding $ \mathcal B\to  \mathcal A$ inducing an epimorphism $$\Hom( \mathcal A, \mathcal B^{\vee}(-\delta))\twoheadrightarrow \Hom( \mathcal B, \mathcal B^{\vee}(-\delta)).$$ Consider a general map $\psi\colon  \mathcal A^{\vee}(-\delta)\oplus  \mathcal B\to     \mathcal A\oplus  \mathcal B^{\vee}(-\delta)$ and assume it to be of generic maximal rank. Then there exists an embedding $\mathcal B\hookrightarrow  \mathcal A^{\vee}(-\delta)\oplus  \mathcal B$ such that $ \mathcal B$ is regular isotropic with respect to $\psi$. Moreover, if we apply the quadric reduction induced by $ \mathcal B$ to the quadric bundle $Q_{\psi}$ associated with $\psi$, we obtain  a quadric bundle $Q_{\phi}$ associated with a symmetric map  $\phi\colon \mathcal{ T}(-\delta)\to \mathcal{T}^{\vee}$, where $\mathcal{ T}$ is a vector bundle fitting in the short exact sequence: $0\to  \mathcal B\to  \mathcal A \to\mathcal{ T}^{\vee}\to 0$.

\end{lem}
\begin{proof}
By generality assumption, in the decomposition (\ref{decomposition of sym A+B})  the component of $\psi$ in $H^0( \mathcal B\otimes  \mathcal B^{\vee})$  can be assumed to be of maximal rank; a change of coordinates allows us to assume it is the identity.  Under our assumptions, the epimorphism $\phi$ allows us to replace  $\psi$ with a conjugate $\theta\psi \theta ^{\vee}$, where $\theta$ is an automorphism  of  $ \mathcal A\oplus  \mathcal  B^{\vee}(-\delta)$  such that the restriction of $\psi$ to $H^0(\Sym^2  \mathcal B^{\vee}(-\delta))$ is 0. Accordingly, $ \mathcal B$ is regular isotropic. The associated quadric reduction is a quadric bundle in the projectivization of the dual of the cokernel of a map $ \mathcal B\to  \mathcal A$. This proves the claim.
\end{proof}

\begin{cor}\label{resolution easy bundles} Let  $\mathcal{T}$ be a vector bundle and  let
$0\to \mathcal B\to  \mathcal A \to\mathcal{ T}^{\vee}\to 0$
be a length two free resolution of $\mathcal{ T}^{\vee}$. Assume that $H^2(\bigwedge^2  \mathcal B(\delta))=0$ for some $\delta\in \mathbb Z$. Let $(C, \cc) $ be a general plane curve of degree $2d$ equipped with a quasi half-period. Then there exists a symmetric map $\varphi\colon \mathcal{T}(-\delta)\to \mathcal{T}^{\vee}$ giving a resolution $0\to \mathcal{T}(-\delta)\to \mathcal{T}^{\vee}\to \cc\to 0$ if and only if $(\mathcal A\oplus  \mathcal  B^{\vee}(-\delta), \delta)$ appears in the classification of Subsection  \ref{free resolutions}.
\end{cor}

\begin{proof} Apply Proposition \ref{split resolution} and Proposition \ref{quadric reduction} and Lemma \ref{quadric reduction of general map}. 
\end{proof}

\begin{rem} The restriction to the fibers of the map in Lemma \ref{quadric reduction of general map} shows that the cokernel of the symmetric map from $H^0(\Sym^2(\mathcal A\oplus \mathcal B^{\vee}))$ is the same as the cokernel of $q$. Therefore, in Corollary \ref{resolution easy bundles} we obtain a free symmetric resolution of the same sheaf from a locally free symmetric resolution of $\cc$.
\end{rem}

%\begin{defi} {\em We will call $\delta$-easy the bundles $\mathcal{T}$ on $\mathbb{P}^2$ which admit a free resolution $0\to B\to A \to \mathcal{ T}^{\vee} \to 0$ with  $H^2(\bigwedge^2 B(\delta))=0$.}
%\end{defi}

\begin{rem}
Note that the condition $H^2(\bigwedge^2 \mathcal B(\delta))=0$  for $\mathcal B=\bigoplus_{i=1}^n \mathcal{O}(b_i)$ amounts to saying that for all pairs $ 1\leq i < j\leq n$ we have $b_i+b_j+\delta > -3$.
Furthermore, if $\mathcal{T}^{\vee}$ admits a free resolution $0\to\mathcal B\to\mathcal A \to\mathcal{ T}^{\vee}\to 0$  with $\mathcal B$ a vector bundle of rank at most two, then we have $H^2(\bigwedge^2 \mathcal B(\delta))=0$. 
\end{rem}

%
%\begin{cor} Let $(C, \cc) $ be a general plane curve of degree 6 equipped with a half-period. Any two symmetric locally free resolutions of $\cc$ involving easy bundles are related by quadric reductions.
%\end{cor}

%\begin{rem} Note that in Proposition \ref{quadric reduction} the quadric reduction is defined with respect to a split bundle. This implies that we can decompose it into a sequence of quadric reductions with respect to sections, i.e. hyperbolic reductions. It is natural to ask for a classification of all symmetric locally free resolutions of sheaves supported on smooth plane sextics, in particular which of them are related by quadric reductions and which are related by hyperbolic reductions.  We hope to address the problem of classification of symmetric resolutions in a future work.
%\end{rem}

\section{Application to rationality problems}\label{rat} 
Consider the map  from the set of quadric surface bundles over $\PP^2$ to the union of moduli spaces of plane curves equipped with quasi half-periods given by taking the associated cokernel. Let us call it the {\em cokernel map}. After fixing the degree of the curve and the type of quasi half-period, the latter moduli spaces are well studied. In particular, they are known to be irreducible (cf. \cite[$\S$ 4]{B}, \cite[Propositions 2 and 3]{B2}). %The map is given by taking the associated cokernel. 
Through the cokernel map a family of quadric surface bundles is mapped to a family of  pairs $(C,\cc)$. Our aim in this section is to provide families of quadric surface bundles which are mapped via the cokernel map to families that dominate moduli spaces of plane curves equipped with quasi half-periods. We will call them \emph{dominant families}. We pursue the following goals:
\begin{itemize}
\item to provide alternative models for dominant families of quadric surface bundles, the rationality of which is not known, such as Verra fourfolds or cubic fourfolds containing a plane;
\item to provide dominant families of quadric surface bundles such that very general quadric surface bundles in these families are not stably rational and to deduce irrationality of any quadric surface bundle whose associated cokernel is very general in the corresponding  moduli spaces; 
\item to provide dominant families of quadric bundles with rational general element, so that all quadric bundles with associated cokernel in the corresponding moduli space will be rational.
\end{itemize}

The method of quadric reduction will be our main tool to construct quadric surface bundles of each type.
\begin{rem}
For any quadric surface bundle one can always construct a birational model as follows. Consider a cokernel sheaf $\cc$ associated with $Q$  and take  one of its free symmetric resolutions $\mathfrak R$. This exists regardless of the genericity assumption. Consider the quadric bundle $Q_{\mathfrak R}$ associated with the free resolution $\mathfrak R$ and perform quadric reductions of $Q_{\mathfrak R}$ with respect to isotropic sections (cf. Remark \ref{reduction to rank 4}) until we get a  (weak) quadric surface bundle. As a consequence of Theorem \ref{main} and Corollary \ref{birational quadric bundle}, the latter quadric surface bundle will be birational to $Q$. Summing up, any quadric bundle has a birational model which is obtained as a quadric reduction of a free resolution of some quasi half-period.
 Although this can be made explicit in a chosen example, in what follows we are interested in constructing families of quadric surface bundles and hence we look at families of quadric reductions, which is a much more complicated problem requiring additional assumptions.
 \end{rem}

In order to perform a quadric reduction, we need to find a regular isotropic sub-bundle of sufficiently high rank. For the sake of simplicity, we investigate split isotropic sub-bundles of quadric bundles in projectivizations of split bundles associated with free symmetric but not necessarily minimal resolutions introduced in Section \ref{ex} (cf. Section \ref{free resolutions}). Let $Q$ be a quadric bundle in the projectivization of a split bundle $\PP(\mathcal V)$. It is associated with a symmetric matrix $A$ of polynomials. The embedding of a projectivization of a split bundle $\PP(\mathcal U)$ in   $\PP(\mathcal V)$ is determined by a polynomial matrix $N$ representing a map $N\colon\mathcal V\to \mathcal U$. The matrix $N$ determines a regular isotropic sub-bundle of $Q$ if $N^TAN=0$ and both $N$ and $AN$ are everywhere of maximal rank.
We need only to consider the cases from the classification in Section \ref{free resolutions}.

The existence of suitable quadric reductions will follow from a rank computation in each of the cases above.
\begin{lem} \label{lem rank} Let us fix non-negative integers, $r$, $l$, $k$. Let $M\in \Hom(\mathcal O^l, \mathcal O(2)^r)$ and $Z\in H^0(\Sym^2(\mathcal O^l)(2))$ be general. Then the map 
$$
\begin{array}{ccc}
 \Hom(\mathcal O^r, \mathcal O^l)\times \Hom(\mathcal O^r, \mathcal O(1)^{k}) & \overset{\eta}{\longrightarrow} & H^0(\Sym^2(\mathcal O^r)(2)) \\
 & & \\
 (K,N) & \mapsto & K^T M^T+M K+ K^TZK+ N^TN
\end{array}
$$
is dominant if $k=4-l$ and one of the following holds:  $r\leq 3$ or ($r\leq 5$ and $l\leq 3$) or ($r\leq 6$ and $l\leq 2$) or ($r\leq 8$ and $l\leq 1$) or ($r=9$ and $l=0$). 
\end{lem}
\begin{proof}
This is proven by means of Macaulay 2. A script that can be used to perform the computations can be found in Appendix A {\color{black}and \cite{M2GH}}. The map $\eta$ is a map between affine spaces that is given by linear and quadratic polynomials. Using Macaulay 2, we compute the rank of its differential over chosen points and always find a regular point of $\eta$ (if we stay in the assumed range); hence $\eta$ is dominant. Since our goal is to prove the theorem for any field of characteristic different from 2, we work over the integers and compute the greatest common divisor of several maximal minors of the differentials of $\eta$ in several points defined over $\mathbb Z$. In each case, we manage to obtain a greatest common divisor which is a power of 2. This implies that we can find a point at which $\eta$ is regular in every characteristic different from 2.
\end{proof}

We can now prove the existence of quadric reductions by a dimension count. 

\begin{cor} \label{cor half period} Let $A$ be a general symmetric map
$A\colon \mathcal{O}(-1)^d\oplus \mathcal O^k \to \mathcal O(1)^d\oplus \mathcal O^k$. Then for $(d,k)$ such that $d+k$ is even and the triple $(r,l,k)=(\frac{d+k}{2}-2, 2+\frac{d-k}{2},k)$ satisfies the condition in Lemma \ref{lem rank}, there exists a regular isotropic sub-bundle $$\mathcal{O}(-1)^{\left(\frac{d+k}{2}-2\right)}\simeq \mathcal U\subset \mathcal{O}(-1)^d\oplus \mathcal O^k$$ with respect to $A$. Moreover, the quadric reduction with respect to ${\mathcal U}$ is a quadric bundle $Q^{A}_{{\mathcal U}}$ given by a general map $\mathcal T\to \mathcal T^{\vee}$, where $\mathcal T$ is described by the following exact sequence:
$$0\to \mathcal{O}(-1)^{\left(\frac{d+k}{2}-2\right)}  \to \mathcal O(1)^{\left(\frac{d-k}{2}+2\right)} \oplus \mathcal O^k\to \mathcal T^{\vee}\to 0.$$
\end{cor}

\begin{proof} Denote by $r$ the integer $\frac{d+k}{2}-2$. Note that by change of coordinates we may assume $A=A'\oplus \operatorname{id}$, where $A'$ is a map $\mathcal{O}(-1)^d\to \mathcal O(1)^d$ given by $d\times d$ matrix of quadric polynomials and $\operatorname{id}:\mathcal O^k\to \mathcal O^k$ is the identity represented by the identity matrix $\operatorname{Id}_k$. We look for matrices $U$ of size $r\times (d+k)$ consisting of a block $K'$ of size $r\times d$ of constants and a block $N$ of size $r\times k$ of linear forms. In other words, we have a block decomposition:
$$A= \left(\begin{array}{c|c}A'& 0\\
\hline
0& \operatorname{Id}_k
\end{array}\right), \quad
U= \left(\begin{array}{c}K'\\
\hline
N
\end{array}\right).
$$

The isotropy condition $U^TAU=0$ translates to $K'^TA'K'+N^TN=0$. Let us now  split further $A'$ into a symmetric $r\times r$ block $A''$, another symmetric $(d-r)\times (d-r)$ block $Z$ and an $r\times (d-r)$ block $M$ and assume that $K'$ consists of an identity block and an $r\times (d-r)$
 block $K$: 
$$A'= \left(\begin{array}{c|c}A''& M\\
\hline
M^T& Z
\end{array}\right), \quad
K'= \left(\begin{array}{c}\operatorname{Id}_r\\
\hline
K
\end{array}\right).
$$
 
Then the isotropy condition translates to:
 $$A''+MK+(MK)^T+ K^TZK+N^TN=0.$$ Since $A'$ is general, the map 
$$(K, N)\mapsto MK+(MK)^T+ K^TZK+N^TN$$
is dominant by Lemma \ref{lem rank} and the corresponding equation has a solution. Since $N$ is everywhere of maximal rank, it defines a subbundle  ${\mathcal U}$ as in the statement, which can be assumed to be smooth by genericity. The description of the quadric reduction and its geometric interpretation follow easily.
\end{proof}

\begin{rem}
From a geometric point of view, the quadric surface bundles  $Q^{A}_{\mathcal U}$ in Corollary \ref{cor half period} are described as complete intersections of $\frac{d+k}{2}-2$ divisors of class $\xi +h$ and one divisor of class $2\xi$ in the projective bundle $\mathbb{P}\left(\mathcal O_{\mathbb P^2}(1)^{\left(\frac{d-k}{2}+2\right)}\oplus \mathcal O^k_{\mathbb P^2}\right)$, where $\xi$ is the relative hyperplane class and $h$ is the class of the pullback of a line in $\mathbb{P}^2$. In particular, for $k=d+4$ the quadric bundle $Q^{A}_{\mathcal U}$ is a complete intersection of $d$ divisors of bidegree $(1,1)$ and one divisor of bidegree $(0,2)$ in $\mathbb{P}^2\times \mathbb{P}^{d+3}$.
\end{rem}

\begin{cor}\label{cor even theta} Let $A$ be a general symmetric map $A\colon \mathcal{O}(-1)^{2d} \to \mathcal O^{2d}$ for $d\leq 7$. Then there exists a sub-bundle $\mathcal{O}(-1)^{d-2}\simeq \mathcal  U\subset \mathcal{O}(-1)^{2d}$ that is regular isotropic with respect to $A$. The corresponding quadric reduction is a quadric surface bundle $Q^{A}_{\mathcal U}$ given by a general map $\mathcal T(-1)\to \mathcal T^{\vee}$ such that
$$0\to \mathcal{O}(-1)^{d-2} \to \mathcal O^{d+2} \to \mathcal T^{\vee}\to 0.$$ Geometrically, $Q^{A}_{\mathcal U}$ is a complete intersection of $(d-2)$ divisors of bidegree $(1,1)$ and one divisor of bidegree $(1,2)$ in $\mathbb{P}^2\times \mathbb P^{d+1}$.

\end{cor}
\begin{proof} Consider the block decomposition of $$A= \left(\begin{array}{c|c}A'& M\\
\hline
M^T& Z 
\end{array}\right),
$$
where $A'$ is a $(d-2)\times (d-2)$ diagonal block. We will look for matrices $U$ of the form  $$U=\left(\begin{array}{c}\operatorname{Id}_{d-2}\\
\hline
N
\end{array}\right),
$$ such that $U^TAU=0$. This amounts to checking that for a general $A'$ the matrix  $-A'$ is in the image of the map $N\mapsto MN+N^TM^T+N^TZN$. This map is dominant for $d\leq 7$, which can be checked similarly to Lemma \ref{lem rank} with the use of a script analogous to that of Appendix A and \cite{M2GH}.
\end{proof}

\begin{cor}\label{cor odd theta} Let $A$ be a general symmetric map $$A\colon  \mathcal{O}(-1)^{2d-3}\oplus \mathcal O(-2) \oplus (\mathcal O\oplus \mathcal O(-1) )^k \to \mathcal O^{2d-3}\oplus \mathcal O(1) \oplus (\mathcal O\oplus \mathcal O(-1) )^k
$$ for $d\leq 6$ and $k\in \{0,1\}$. Then there exists a sub-bundle $$ \mathcal{O}(-1)^{(d+k-3)} \oplus \mathcal O(-2)^k \simeq U\subset  \mathcal{O}(-1)^{(2d-3)}\oplus \mathcal O(-2)$$ regular isotropic with respect to $A$. The quadric reduction with respect to $U$ is a quadric surface bundle $Q^{A}_{\mathcal U}$ given by a general map $\mathcal T(-1)\to \mathcal T^{\vee}$ such that 
$$0\to \mathcal{O}(-1)^{d-3} \oplus \mathcal O(-2)^k \to  \mathcal O^{d+k} \oplus \mathcal O(1)^{1-k} \oplus \mathcal O(-1)^k \to \mathcal T^{\vee}\to 0.$$
\end{cor}

\begin{proof} The proof is analogous to that of Corollary \ref{cor half period} and Corollary \ref{cor even theta}.
\end{proof}

\begin{rem}
Geometrically, the quadric bundles $Q^{A}_{\mathcal U}$ from Corollary \ref{cor odd theta}  are complete intersections of $(d-3)$ divisors of class $\xi+h$, $k$ divisors of class $\xi+2h$ and one divisor of class $2\xi+h$ in $\mathbb{P}\left( \mathcal O^{d+k} \oplus \mathcal O(1)^{1-k} \oplus \mathcal O(-1)^k\right)$. In particular, for $k=0$  the bundles $Q^{A}_{\mathcal U}$ are birational to residual components of intersections of $(d-3)$ quadrics and a cubic containing a $\mathbb{P}^{d-1}$ in $\mathbb{P}^{d+2}$.
\end{rem}

Corollary \ref{cor half period}, Corollary \ref{cor even theta}, Corollary \ref{cor odd theta} allow us to describe birational models for quadric surface bundles over $\mathbb{P}^2$ with discriminant a general smooth curve $C$ of degree $2d \leq 12$. In particular, we can now easily deduce Proposition \ref{qw}.
\begin{comment}
 \begin{prop}\label{qw} Let $C$ be a general smooth plane curve of degree $2d\leq 12$. Let $Q$ be a quadric surface bundle over $\mathbb{P}^2$ with discriminant $C$. Then $Q$ {\color{black}has isomorphic generic fiber over $\mathbb{P}^2$ with} one of the following:
\begin{enumerate}
\item a trivial quadric surface bundle,
\item a complete intersection of $d$ divisors of bidegree $(1,1)$ and one divisor of bidegree $(0,2)$  in $\mathbb{P}^2\times \mathbb{P}^{d+3}$,
\item a complete intersection  of $d-2$ divisors of bidegree $(1,1)$ and one divisor of bidegree $(1,2)$ in $\mathbb{P}^2\times \mathbb P^{d+1}$,
\item the residual component  of an intersection of $d-3$ quadrics and a cubic containing a $\mathbb{P}^{d-1}$ in $\mathbb{P}^{d+2}$.
\end{enumerate}
\noindent Conversely, a general curve $C$ appears as a discriminant of some quadric bundle in any of the $3$ families above. 

\end{prop}
\end{comment}

\begin{proof}[Proof of Proposition \ref{qw}]% If $C$ is a general curve, up to twist by line bundles on $\mathbb{P}^2$, there are $4$ types of quasi half-periods on $C$. These are: the trivial bundle,  half-periods, even or odd theta characteristics. As proved in \cite{C}, each of these admits a free symmetric resolution associated to a symmetric map $A$ as in one of the Corollaries \ref{cor half period}, \ref{cor even theta}, \ref{cor odd theta} with $k=0$. For such $A$ we can construct  a quadric surface bundle $Q^A_{\mathcal U}$ related to it and belonging to one of the families listed in the assertion.

If $Q$ is any quadric bundle with discriminant $C$, it must be associated with a symmetric map between a vector bundle and a twist of its dual. The cokernel of such a map is a quasi half-period on $C$, which after a twist by some line bundle $\mathcal O_C(\gamma)$ is a line bundle $\mathcal L$ of one of the four following types: the trivial bundle, a half-period, an even theta characteristic, an odd theta characteristic.
 As proved in \cite{C}, each of the non-trivial cases admits a free symmetric resolution associated to a symmetric map $A$ as in the assumptions of Corollary \ref{cor half period}, Corollary \ref{cor even theta}, Corollary \ref{cor odd theta} for $k=0$. By extending $A$ - if possible - by the identity map to some map $A_k$, we have a suitable resolution of $\cc$  for any $k$. In the assumed ranges we can use Corollary \ref{cor half period}, Corollary \ref{cor even theta}, Corollary \ref{cor odd theta} to construct  a quadric surface bundle $Q^{A_k}_{\mathcal U}$ related to $A_k$ and belonging to one of the families listed in the statement.
By  Theorem \ref{main}, we deduce that $Q$ and $Q^{A_k}_{\mathcal U}$ are birational.
%o compare $Q$ with the quadric bundle $Q_A$ associated by Corollaries \ref{cor half period}, \ref{cor even theta}, \ref{cor odd theta} to a free resolution $R_A$ of  $\mathcal L$.  As a consequence, the quadric surface bundle $Q$ is birational over $\mathbb{P}^2$ to one of the quadric surface bundles from the families listed in the assertion.
\end{proof}

Using the same methods we can also construct more birational models of the same quadric surface bundles. 
\subsection{Birational models of quadric bundles with discriminant of degree $6$}\label{deg6}
In this section we aim to construct different birational models of the quadric surface bundles in Proposition \ref{qw}.

Fix $A$ giving a minimal symmetric free resolution of a half period or a theta characteristic. Consider $A_k=A\oplus \operatorname{id}$ for some $k$ such that $(A_k,k)$ satisfy the assumption of {\color{black} Corollary \ref{cor half period} or Corollary \ref{cor odd theta}.} Then by Theorem \ref{main} we obtain different birational models of the same quadric surface bundle. More precisely, regardless of the choice of $\mathcal U_k$ and $\mathcal U$ in Corollary \ref{cor half period} or Corollary \ref{cor odd theta}, the quadric bundle ${Q}^{A_{k}}_{\mathcal U_k}$ will be birational  to ${Q}^{A}_{\mathcal U}$. As a consequence, we can find the following birational models of quadric bundles with discriminant curves of degree $6$. Similar constructions can be performed for bundles with discriminant of higher degree.

\begin{example}\label{po} Corollary \ref{cor half period} gives rank $4$ symmetric resolutions for a general curve $C$ of degree $6$ equipped with a half-period $\cc$. More precisely, there exists a symmetric resolution
  $0\to \mathcal{T}\to \mathcal{T}^{\vee}\to \cc(3)\to 0,$ whenever $\mathcal T^{\vee}$ has one of the following resolutions:
  \begin{enumerate}
  \item $0\to \mathcal O(-1)^3\to \mathcal O^7 \to\mathcal{ T}^{\vee}\to 0$,
  \item $0\to \mathcal O(-1)^2\to \mathcal O^5 \oplus \mathcal O (1)\to\mathcal{ T}^{\vee}\to 0$,
  \item $0\to \mathcal O(-1)\to \mathcal O^3 \oplus \mathcal O (1)^2\to\mathcal{ T}^{\vee}\to 0$,
  \item $\mathcal{ T}^{\vee}=\mathcal O \oplus \mathcal O(1)^3$.
  \end{enumerate}
In each of the cases above, the quadric bundles have natural geometric descriptions providing alternative birational models of general Verra fourfolds. We have the following corresponding quadric fibrations.
Let $\xi$ be the class of the Grothendieck bundle $\mathcal{O}_{\PP(\mathcal A)}(1)$ and  $h$ the pullback of the class line from $\mathbb{P}^2$ through the fibration of $\mathbb{P}(\mathcal A)$, where as in Section \ref{ex}, $\mathcal A$ is defined via the resolution $ 0\to\mathcal  B\to \mathcal A \to\mathcal{ T}^{\vee}\to 0.$

 \begin{enumerate}
  \item $\mathcal A=\mathcal O^7$ and the quadric fibration $Q_1$  is a complete intersection of three divisors of class $\xi+h$ and one divisor of class $2\xi$. Thus, we get a complete intersection n $\PP^2\times \PP^6$ of three divisors of bidegree $(1,1)$ and one divisor of bidegree $(0,2)$.
  \item $\mathcal A=\mathcal O^5\oplus \mathcal O(1)$  and $Q_2$ is a complete intersection of two divisors of class $\xi+h$ and one divisor of class $2\xi$. This case will be discussed more precisely in Section \ref{1.4} (cf. Lemma \ref{5.3}).
  \item $\mathcal A=\mathcal O^3\oplus \mathcal O(1)^2$  and $Q_3$ is a complete intersection of one divisor of class $\xi+h$ and one divisor of class $2\xi$.
  \item $\mathcal A=\mathcal T^{\vee}=\mathcal O \oplus \mathcal O(1)^3$ and $Q_4$ is a divisor of class $2\xi$. Since the system $|\xi|$ maps $\mathbb{P}(\mathcal O \oplus \mathcal O(1)^3)$ to a cone over $\mathbb{P}^2\times \mathbb{P}^2$ the quadric bundle $Q_4$ is a general Verra fourfold (cf. Lemma \ref{Verra}). \end{enumerate}
\end{example}

\begin{example}Let us also give an example of a symmetric resolution involving a bundle $\mathcal T$ that does not satisfy the assumptions of Corollary \ref{resolution easy bundles}. Let $\mathcal T$ be such that its dual $\mathcal T^{\vee}$  admits the following resolution:
$$0\to \mathcal O(-2)\oplus\mathcal O(-1)\to \mathcal O^6 \to\mathcal{ T}^{\vee}\to 0.$$ Then $\mathrm{Sym}^2 T$ has $21$ sections and a general such section gives an exact sequence
 $0\to \mathcal{T}\to \mathcal{T}^{\vee}\to \cc\to 0$ for some $\cc$ representing a half-period of a smooth discriminant curve $C$. The corresponding quadric fibration gives a fourfold which is birational to some special Verra fourfold. The family of such fourfolds has dimension $21-9=12.$ The elements of this family are described as complete intersections of three divisors of respective bi-degrees  $(1,2)$, $(1,1)$, $(2,0)$ in $\mathbb{P}^5\times \mathbb{P}^2$.
 \end{example}

\begin{example}\label{2.14} For $\cc$ a theta characteristic on a general curve $C$ of degree $6$, we have a symmetric resolution  $$0\to \mathcal{T}(-1)\to \mathcal{T}^{\vee}\to \cc(1)\to 0,$$ whenever  $\mathcal T^{\vee}$ has one of the following resolutions:
\begin{itemize}
  \item [(a)] $0\to \mathcal O(-1)\to \mathcal O^5 \to\mathcal{ T}^{\vee}\to 0$ and $h^0(\cc)=0$;
   \item [(b)] $0\to \mathcal O(-2)\to \mathcal O^4 \oplus \mathcal O(-1)\to\mathcal{ T}^{\vee}\to 0$ and $h^0(\cc)=1$;
   \item [(c)] $\mathcal{ T}^{\vee}=\mathcal O^3 \oplus \mathcal O(1)$ and $h^0(\cc)=1$.
\end{itemize}
A geometric description of the associated quadric fibration is given as follows.
 \begin{itemize}
  \item [(a)] $\mathcal A= \mathcal O^5$  and the quadric fibration $Q_1$  is a complete intersection of two divisors of class $\xi+h$ and one divisor of class $2\xi+h$. Thus, we get a complete intersection in $\PP^2\times \PP^4$ of one divisor of bidegree $(1,1)$ and one  divisor of bidegree $(1,2)$.
  \item [(b)] $\mathcal A=\mathcal O^4 \oplus \mathcal O(-1)$  and $Q_2$ is a complete intersection of two divisors of class $\xi+2h$ and one divisor of class $2\xi+h$. The image is naturally embedded in the cone $C(\PP^2\times \PP^3)\subset \PP^{11}$.
  \item [(c)] $\mathcal A=\mathcal O^3\oplus \mathcal O(1)$ and $Q_3$  is a divisor of class $2\xi+h$. Here $Q_3$ is mapped isomorphically to a cubic containing  a plane in $\mathbb{P}^5$.
\end{itemize}
\end{example}
Summing up, we obtain the following birational models of quadric surface bundles over $ \mathbb{P}^2$.
\begin{cor}\label{pp} Let $Q$ be a quadric surface bundle over $\mathbb{P}^2$ whose associated cokernel is a twist of an even theta characteristic on a smooth sextic curve. Then $Q$ is rational and {\color{black} is isomorphic at the generic point of  $\mathbb{P}^2$ to} a complete intersection of two divisors of bi-degrees $(1,1)$,$(1,2)$ in $\PP^2\times \mathbb P^4$.
\end{cor}
\begin{proof} By Proposition \ref{qw}, a quadric bundle whose associated cokernel is an even theta characteristic on a general curve of degree $6$  is birational to some quadric surface bundle $Q_1$ from Example \ref{2.14} (a) (cf. Proposition \ref{qw}(2) for $d=3$). The latter is rational, as the natural projection from the quadric bundle to $\PP^4$ is a birational morphism.
\end{proof}
\begin{rem} Corollary \ref{pp} provides examples of quadric surface bundles which are not birational over $\mathbb{P}^2$ to the trivial bundle (since the associated Brauer class is non-trivial) but are still rational. 
\end{rem}

%Theorem \ref{main} can also be used to provide different birational models of Verra fourfolds and cubics containing a plane.
Let $\xi_i$, for $i=1\dots 6$, in each case below denote the class of the Grothendieck bundle $\mathcal{O}_{\PP(\mathcal A_i)}(1)$ and  $h_i$ be the pullback of the class of a line from $\mathbb{P}^2$ through the fibration of $\mathbb{P}(\mathcal A_i)$.
\begin{corr}\label{corr} Consider the following families $\mathfrak{F}_i$ of varieties.
\begin{enumerate}
%\item  $\mathfrak{F}_1 =\{\text{nodal Gushel--Mukai  fourfolds}\}$;

\item $\mathfrak{F}_1 =\{\text{complete intersections of three $(1,1)$  divisors in $\PP^2\times \mathbf Q_5$}\}$, \\where $\mathbf{Q}_5$ stands for a smooth five-dimensional quadric;
\item $\mathfrak{F}_2 =\{\text{complete intersections of three divisors in   $\mathbb{P}(\mathcal A_2)=\mathbb P(\mathcal O_{\mathbb P^2}^5\oplus \mathcal O_{\mathbb P^2}(1))$,} \\
\text{two of class $\xi_2+h_2$ and one of class $2\xi_2$}\}$,  %where $\xi$ is the class of $\mathcal O_{\PP(\mathcal{O}_{\mathbb P^2}^5\oplus \mathcal{O}_{\mathbb P^2}(1))}(1)$ and $h$ is the pullback of the class of a line in $\mathbb{P}^2$ via the projection to the base of the projective bundle. 
  \item $\mathfrak{F}_3 =\{\text{complete intersections of two  divisors in $\mathbb{P}(\mathcal A_3)=\PP(\mathcal{O}_{\mathbb P^2}^3\oplus \mathcal{O}_{\mathbb P^2}(1)^2)$ }\\\text{of respective class } \xi_3+h_3 \text{ and }2\xi_3 \}$, 
  %where $\xi$ is the class of $\mathcal O_{\PP(\mathcal{O}_{\mathbb P^2}^3\oplus \mathcal{O}_{\mathbb P^2}(1)^2)}(1)$ and $h$ is the pullback of the class of a line in $\mathbb{P}^2$ via the projection to the base of the projective bundle. 

  \item $\mathfrak{F}_4 =\{\text{Verra fourfolds}\}$;
\end{enumerate}
 Then for every $i,j\in \{1\dots 4\}$ a general element of $\mathfrak{F}_i$ is birational to some element of $\mathfrak{F}_j$. Moreover, consider the following two families of varieties.
 \begin{enumerate}
 \item $\mathfrak{F}_5$=$\{\text{cubic fourfolds  containing a plane}\}$;
 \item $\mathfrak{F}_6=\{\text{complete intersections of divisors of respective class } 2\xi_6+h_6, \xi_6+2h_6 \text{ on } \\
 \mathbb{P}(\mathcal A_6)=\mathbb{P}(\mathcal O_{\mathbb P^2}^4 \oplus \mathcal O_{\mathbb P^2}(-1))\},$
 %where $\xi$ is the class of $\oo_{\PP(\mathcal O_{\mathbb P^2}^4 \oplus \mathcal O_{\mathbb P^2}(-1))}(1)$ and $h$ is the pull back of the class of a line from $\PP^2$ via the projection to the base of the projective bundle.
  \end{enumerate}
  Then a general element of $\mathfrak{F}_5$ is birational to some element of $\mathfrak{F}_6$ and conversely a general element of $\mathfrak{F}_6$ is birational to some element of $\mathfrak{F}_5$.
\end{corr} 
\subsection{Irrationality of quadric bundles; Proof of Corollary  \ref{cor irrationality}} \label{irrat} 
In this section, we work over the field of complex numbers.

For $d=2$ the quadric surface bundles are all rational, as our models are in fact quadric surface bundles contained in projectivizations of split bundles, which are rational for $d=2$: see, for instance, \cite{Sch1}. 

For $d=3$, among the three non-trivial types of quadric bundles with general discriminant there exists two families for which rationality is an open problem. These are Verra fourfolds and cubic fourfolds containing a plane. The remaining types consist of rational quadric surface bundles described by Corollary \ref{pp}.

%\begin{lem}\label{rationality of new case} A general  quadric bundle $Q_1$  from Example \ref{2.14} (a) is rational.
%\end{lem}
%\begin{proof} Observe that the natural projection to $\mathbb{P}^4$ is a birational morphism. 
%\end{proof}

Our aim is to prove the irrationality of very general quadric bundles (in terms of their cokernels) in the range presented in Corollaries \ref{cor half period}, \ref{cor even theta}, \ref{cor odd theta} with the additional condition $d \geq 4$. For the proof, we are looking for degenerations of such families of quadric surface bundles to quadric surface bundles with non-trivial non-ramified cohomology introduced in \cite{HPT}. We do it in several steps. Starting with a moduli space of cokernels
 we first find a degeneration of the family $\{ Q_{\iota}\}$ of higher rank quadric bundles corresponding to resolutions from Section \ref{free resolutions} such that the degeneration is a quadric bundle $Q_0$ admitting a quadric reduction to a quadric surface bundle $Q_F$ as in \cite{HPT}. Then, we perform a global quadric reduction on the family $\{ Q_{\iota}\}$ to quadric surface bundles and compare its limit in the direction of $Q_0$ to $Q_F$ by means of Corollary \ref{birational quadric bundle}. We deduce stable irrationality of the very general element in the constructed family of reductions, which is dominating the chosen moduli space. Finally, by Proposition \ref{qw} this implies stable irrationality of the very general element of any family dominating the chosen moduli space.

%Let $\widetilde Q$ be a quadric surface bundle with smooth very general discriminant curve $C$ of degree $2d$ associated with a symmetric map 
%$\phi\colon \mathcal T(-\delta)\to \mathcal T^{\vee}$
%for some bundle $\mathcal{T}$. Let $\cc$ be the cokernel sheaf of $\phi$, which is a quasi half-period on $C$. It follows that $\cc$ admits one of the free symmetric resolutions described in Section \ref{free resolutions}.
%We will construct families of suitable quadric reductions of the corresponding higher dimensional quadric bundles using the 

The first step is split into two Lemmas.

\begin{lemma}\label{degeneration from extension} Let $\mathcal A$ be a vector bundle and $\delta\in \mathbb Z$.
Let $\{{Q}_{\iota}\}_{\iota\in I}$ with $Q_\iota \subset \mathbb{P}(\mathcal A^{\vee})$ be the family of all quadric bundles over $\mathbb{P}^2$ corresponding to symmetric maps
$\mathcal A(-\delta)\to \mathcal A^{\vee}$. Assume there exist bundles $\mathcal H$, $\mathcal E$  and an exact sequence:
\begin{equation}\label{exact sequence for degeneration} 0\to\mathcal H^{\vee}\xrightarrow{\eta} \mathcal A^{\vee}\xrightarrow{\theta} \mathcal E^{\vee}\to 0,
\end{equation}
such that:
\begin{enumerate}
\item There exists a symmetric isomorphism $D\colon \mathcal H(-\delta)\to \mathcal H^{\vee}$. 
\item  The restriction map induced by the exact sequence \eqref{exact sequence for degeneration} gives a surjection $H^0(\operatorname{Sym}^2 \mathcal A(\delta))\to H^0(\operatorname{Sym}^2 \mathcal E(\delta))$.
\end{enumerate}

 Let $\phi\colon \mathcal E(-\delta)\to \mathcal E^{\vee}$ be a symmetric map and 
$Q_\phi \subset \mathbb P(\mathcal H^{\vee}\oplus \mathcal E^{\vee} )$ the quadric bundle associated with the symmetric map 
$$(D, \phi)\colon \mathcal H(-\delta)\oplus \mathcal E(-\delta)\to \mathcal H^{\vee} \oplus \mathcal E^{\vee}.$$ Then, for general $\iota\in I$, there exists a flat family of quadric bundles with one fiber $Q_{\iota}$ and another fiber $Q_{\phi}$. 
\end{lemma}
\begin{proof} 

Consider a family $\{\psi_t\}_{t\in \mathbb C}$ of maps:
$\psi_t\colon \mathcal A^{\vee}\oplus \mathcal E^{\vee}\to \mathcal E^{\vee},$ defined as $\psi_t=\theta + t\operatorname{id} $, which gives rise to a family of bundles $\mathcal A^{\vee}_t= \ker \psi_t.$ Observe that $\mathcal A^{\vee}_t\simeq \mathcal A^{\vee}$ for general $t\neq 0$, whereas $\mathcal A^{\vee}_0=\mathcal H^{\vee}\oplus \mathcal E^{\vee}$. We now construct a family $\{\Psi_t\}_{t\in \mathbb C}$ of symmetric maps 
$\Psi_t\colon \mathcal A(-\delta)\oplus \mathcal E(-\delta)\to \mathcal A^{\vee}\oplus \mathcal E^{\vee}$, defined by block matrices 

$$\Psi_t=\left(\begin{array}{c|c}t^3 A+t^2M+\eta D \eta^T & t^2\theta A+ t\theta M\\
\hline
t^2(\theta A)^T+ t (\theta M)^T& \phi+ t \theta A \theta^T 
\end{array}\right),$$
where $\eta, \theta, $ are as in (\ref{exact sequence for degeneration}), $M\colon \mathcal A(-\delta)\to \mathcal A^{\vee}$ is a symmetric map  such that $\theta M \theta^T=\phi$ (which exists by assumption),  $D\colon \mathcal H(-\delta)\to \mathcal H$ is the isomorphism from the assumption, and finally $A$ is a general symmetric map $\mathcal A(-\delta)\to \mathcal A^{\vee}$. Note that $\psi_t\circ \Psi_t=0$; hence after quadric reductions induced by $\{\psi_t\}_{t\in \mathbb C}$ the family of maps $\{\Psi_t\}_{t\in \mathbb C}$ induces a flat family of quadric bundles $Q_t$ on $\mathcal A_t$ such that $Q_t$ is general in $\{ Q_\iota\}_{\iota \in I}$ and $ Q_0=Q_\phi$.
\end{proof}

\begin{lemma}\label{extension of sch} Let  $\mathcal L$ be a half-period, or an even or odd theta characteristic on a general plane curve $C$ of degree $2d$ such that $d$ satisfies the corresponding condition in Corollary \ref{cor irrationality}. Then  we can find a resolution $0\to \mathcal A(-\delta)\to \mathcal A^{\vee}\to \cc\to 0$ as in Section \ref{free resolutions} such that there exists a rank $4$ sub-bundle $\mathcal E\subset \mathcal A$ for which the projection map $
\theta\colon \mathcal{ A}^{\vee}\to \mathcal E^{\vee}$ fits into a short exact sequence $0\to \mathcal H^{\vee} \xrightarrow{\eta} \mathcal A^{\vee}\xrightarrow{\theta} \mathcal E^{\vee}\to 0$ and the following hold:
\begin{enumerate}

\item \label{condition on H} There exists a symmetric isomorphism $\mathcal H(-\delta)\to \mathcal H^{\vee}$.
\item The restriction map induced by the exact sequence induces a surjection \\$H^0(\operatorname{Sym}^2 \mathcal A(\delta))\to H^0(\operatorname{Sym}^2 \mathcal E(\delta))$.
\item There exists a non-degenerate symmetric map $F\colon \mathcal E(-\delta)\to \mathcal E^{\vee}$ such that the associated quadric bundle $Q_F\subset \mathbb{P}(\mathcal E^{\vee})$ has smooth general fiber, non-trivial discriminant in $\mathbb{C}(\mathbb{P}^2)/\mathbb{C}(\mathbb{P}^2)^* $ and $H^2_{nr}(\mathbb{C}(Q_F)/\mathbb C, \mathbb Z_2)\neq 0$.
\end{enumerate}
\end{lemma}

\begin{proof} We will look for sub-bundles $\mathcal H^{\vee}$ of $\mathcal A^{\vee}$  which are of the form $\mathcal H^{\vee}=\mathcal O^r \oplus \mathcal H'\oplus \mathcal (\mathcal H')^{\vee}(\delta) $  for some integer $r$ and some bundle $\mathcal H'$. Note that if $\delta=0$ or $r=0$ such an $\mathcal H^{\vee}$ satisfies condition (\ref{condition on H}). Surjectivity (2) will be checked by the exact sequence:
$$0\to\textstyle \bigwedge^2  \mathcal H (\delta)\to \mathcal H\otimes \mathcal A^{\vee}(\delta) \to \Sym^2 \mathcal A^{\vee}(\delta) \mathcal \to  \Sym^2\mathcal E^{\vee}(\delta)\to 0. $$
Indeed, if $$\mathcal F=\ker(  \Sym^2 \mathcal A^{\vee}(\delta) \mathcal \to  \Sym^2\mathcal E^{\vee}(\delta)),$$ surjectivity of $H^0(\operatorname{Sym}^2 \mathcal A(\delta))\to H^0(\operatorname{Sym}^2 \mathcal E(\delta))$ follows from $H^1(\mathcal F)=0$. 
The latter condition is fulfilled in particular when $H^1(\mathcal H^{\vee}\otimes \mathcal A^{\vee}(\delta) )= H^2(\bigwedge^2 \mathcal H^{\vee}(\delta))=0$. In order to satisfy the last condition, we will consider case by case the following explicit sub-bundles. Below we list - in each case - the bundles $\mathcal A$, $\mathcal H$ and the shape of the  free resolution of the resulting candidate bundles $\mathcal E$. 
\begin{enumerate}
\item In case (\ref{eq half period}) we consider $\mathcal A^{\vee}=\mathcal O(1)^d \oplus \mathcal O^l$ and $\delta=0$. We have the following possibilities:
\begin{enumerate}
\item $d=5$, $l=0$, $\mathcal H^{\vee}=\mathcal O$ and  $\mathcal E^{\vee}=\mathcal O(1)^2 \oplus \Omega^1(3)$, i.e. $\mathcal E$ has resolution 
 $$0\leftarrow \mathcal E\leftarrow \mathcal O(-1)^2 \oplus  \mathcal O(-2)^3 \leftarrow  \mathcal O(-3)\leftarrow 0 .$$
 
 \item $d=6$, $l=0$, $\mathcal H^{\vee}= \mathcal O^2$, $\mathcal E^{\vee}= \Omega^1(3)^2$ i.e. $\mathcal E$ has resolution 
$$0\leftarrow \mathcal E\leftarrow \mathcal O(-2)^6 \leftarrow  \mathcal O(-3)^2 \leftarrow 0 .$$

 \item $d=7$, $l=0$, $\mathcal H^{\vee}= \mathcal O^3$, $0\leftarrow \mathcal E\leftarrow \mathcal O(-2)^3 \oplus \mathcal O(-3)^3\leftarrow \mathcal O(-4)^2\leftarrow 0 .$
  \item $d=8$, $l=6$, $\mathcal H^{\vee}= \Omega^1(1)^2 \oplus \Omega^1(2)^2 \oplus \mathcal O^2$, i.e. ${\mathcal E}$ has resolution
$$0\leftarrow \mathcal E\leftarrow \mathcal O(-3)^4 \oplus \mathcal O(-2) ^2 \leftarrow  \mathcal O(-4)^2 \leftarrow 0 .$$
 \item $d=9$, $l=9$, $\mathcal H^{\vee}= \Omega^1(1)^3 \oplus \Omega^1(2)^3 \oplus \mathcal O(1)\oplus \mathcal O(-1)$, and 
  $$0\leftarrow \mathcal E\leftarrow \mathcal O(-3)^6 \leftarrow \mathcal O(-4)\oplus \mathcal O(-5)\leftarrow 0 .$$
\end{enumerate}

\item In case (\ref{eq even theta}) we have $\mathcal A^{\vee}= \mathcal O^{2d}$, $\delta=1$. Moreover, the following hold: 
\begin{enumerate}
\item $d=4$, $\mathcal H^{\vee}=\Omega^1(1)^2$, $\mathcal E=\mathcal O(-1)^2 \oplus \mathcal O^2$.

\item $d=5$, $\mathcal H^{\vee}=\Omega^1(1)^2\oplus \mathcal O\oplus \mathcal O(-1)$, and $\mathcal E$ has resolution
$$0\leftarrow \mathcal E\leftarrow \mathcal O(-1)^5\leftarrow \mathcal O(-2)\leftarrow 0.$$

\item $d=6$, $\mathcal H^{\vee}=\Omega^1(1)^4$, $\mathcal E=\mathcal O(-1)^4$.

\item $d=7, \mathcal H^{\vee}=\Omega^1(1) ^4 \oplus \mathcal O\oplus \mathcal O(-1)$, and $\mathcal E$ has resolution
$$0\leftarrow \mathcal E\leftarrow \mathcal O(-1)\oplus  \mathcal O(-2)^5 \leftarrow \mathcal O(-3)^2 \leftarrow 0.$$
 \end{enumerate}

\item In case (\ref{eq odd theta}) we have $\mathcal A^{\vee}= \mathcal O^{2d-3}\oplus \mathcal O(1)$, $\delta=1$, and 

\begin{enumerate}

\item $d=5$,  $\mathcal H^{\vee}=2\Omega^1(1)$, $\mathcal E=3\mathcal O(-1)\oplus \mathcal O$. 

\item $d=6$,  $\mathcal H^{\vee}= \Omega^1(1)^2\oplus \mathcal O\oplus \mathcal O(-1)$, and $\mathcal E$ has resolution
$$0\leftarrow \mathcal E\leftarrow \mathcal O(-1)^3\oplus \mathcal O(-2)^2\leftarrow \mathcal O(-3)\leftarrow 0. $$

 \end{enumerate}

\end{enumerate}
In each of the cases mentioned before, we would like to find a symmetric map $$F\colon \mathcal E(-\delta)\to \mathcal E^{\vee}$$ defining a weak quadric surface bundle similar to the quadric bundle with diagonal form $\langle x,y,xy, F(x,y,z)\rangle$ on $\mathbb{C}(\mathbb{P}^2)$ from \cite{HPT} (see \cite{T} for the general theory of quadrics). In the cases where $\mathcal E$ is a split bundle this is done in \cite[Proof of Cor. 2]{Sch1}. To deal with the remaining cases, we work with the free resolutions  
$$0\leftarrow \mathcal E \xleftarrow{\kappa} \mathcal M\xleftarrow{\rho} \mathcal N \leftarrow 0.$$ We show that having such a resolution, a quadric bundle $Q_{F}$ is represented over $\mathbb{C}(\mathbb P^2)$ by a suitable $4\times 4$ ``corner`` sub-matrix of the matrix $M_F$ of polynomials representing $\kappa^{T}\circ \phi \circ \kappa$. Indeed, we can find a map $\rho\colon \mathcal N\to \mathcal M$ of everywhere maximal rank, which defines $\mathcal E$, with a distinguished square block $J$ of size $\rk \mathcal N$ and  generic maximal rank, as well as a symmetric map $M\colon \mathcal M(-\delta)\to \mathcal M^{\vee}$ such that $M \circ \rho=0$ and $M$ has a diagonal block $M^J$  similar to $\langle x,y,xy, F(x,y,1)\rangle $, where $$F(x,y,z)=x^2+y^2+z^2-2(xy+xz+yz).$$ 
In each case we use the resolutions of $\mathcal E$ presented above. Note that in each case  $\rk \mathcal N\leq 2$ and $ 5\leq\rk \mathcal M\leq 6$ with  $\rk \mathcal M-\rk \mathcal N=4$.
To be explicit, we consider maps $\rho$ given by matrices of the following shape:
$$
\left(\begin{array}{c}
z^{a_1}\\
y^{a_2}\\
0\\
0\\\hline 
x^{b_1}\\
\end{array}\right) \quad \text{or } \quad \left(\begin{array}{cc}
z^{a_1}&0\\
y^{a_2}&0\\
0& y^{a_3}\\
0& x^{a_4}\\\hline 
x^{b_1}& 0\\
0& z^{b_2}
\end{array}\right),$$
with $a_i>0$ and the lower block representing $J$. $M$ is a matrix with diagonal block $ M^J$ of the shapes below:
$$M=\left(\begin{array}{cccc|cc} 
f_1&0&0&0&g_{11}&g_{21}\\
0&f_2&0&0&g_{12}&g_{22}\\
0&0&f_3&0&g_{13}&g_{23}\\
0&0&0&f_4&g_{14}&g_{24}\\\hline
g_{11}&g_{12}&g_{13}&g_{14}&q_{11}&q_{21}\\
g_{21}&g_{22}&g_{23}&g_{24}&q_{21}&q_{22}\\

\end{array}\right) \quad \text{or } M=\left(\begin{array}{cccc|c} 
f_1&0&0&0&g_{11}\\
0&f_2&0&0&g_{12}\\
0&0&f_3&0&g_{13}\\
0&0&0&f_4&g_{14}\\\hline
g_{11}&g_{12}&g_{13}&g_{14}&q_{11}
\end{array}\right)$$

 Then $M\rho=0$ gives the conditions:
$$
\left\{
\begin{array}{l}
f_1z^{a_1}=-g_{11}x^{b_1},  \quad f_2y^{a_2}=-g_{12}x^{b_1}, \quad f_3y^{a_3}=-g_{23}z^{b_2}, \quad f_4x^{a_4}=-g_{24}z^{b_2}; \\ \\
g_{11}z^{a_1}+g_{12}y^{a_2}=x^{b_1} q_{11}, \quad g_{23}y^{a_3}+g_{24}x^{a_4}=-z^{b_2} q_{22}, \quad g_{21}=g_{22}=g_{13}=g_{14}=q_{21}=0. \\
\end{array}
\right.
$$
As a consequence, there exist polynomials $\{g_{ij}\}$, $\{q_{ij}\}$  for which $M\rho=0$, provided that  $x^{2b_1}|f_i$ for $i=1,2$ and $z^{2b_2} | f_j$ for $j=3,4$. Similarly, for $5\times 5$ matrices the condition $x^{2b_1}|f_i$ for $i=1,2$ is sufficient. Hence we need to check that these conditions can be satisfied by some diagonal matrix $\langle f_1,\dots, f_4\rangle $ similar to $\langle x,y,xy,F(x,y,1)\rangle$. We do it in each case and present below the results for chosen examples.

For example, for an even half-period of degree $10$ we are looking for $(f_1,\dots, f_4) $ of degrees $(4,4,2,2)$ such that $x^2|f_i$ for $i=1,2$. This is fulfilled by the diagonal matrix $$\langle f_1,\dots, f_4\rangle  =\langle F(x,y,z),xy,x,y\rangle .$$ 
For half-periods of degree 18 we have degrees $(6,6,6,6)$ such that $x^2|f_1,f_2$, $z^4|f_3,f_4$ and a solution is given by $\langle xy,F(x,y,1),x, y\rangle $. For an odd theta characteristic of degree $12$ we have degrees $(5,3,3,3)$ such that $x^2|f_1,f_2$ and a solution is given by $\langle x,y,F(x,y,1), xy\rangle .$ For an even theta characteristic of degree $14$ we have degrees $(5,5,5,3)$ such that $x^2|f_1,f_2$, $z^2|f_3,f_4$ and a solution is given by $\langle x,xy,F(x,y,1), y\rangle .$

 \end{proof}

% Recall that to prove stable irrationality of quadric surface bundles associated with very general  symmetric maps $ \mathcal T(-\delta) \to \mathcal T ^{\vee}$, it is enough to construct a family of quadric surface bundles birational to them, which degenerates to a birational model of $Q_F$, and conclude via \cite[thm 9]{Sch1}. In fact, as observed before (cf. Proposition \ref{qw})  it is enough to deal with quadric bundles arising in Corollaries \ref{cor half period}, \ref{cor even theta}, \ref{cor odd theta}. 

Consider a pair $(\mathcal A,\delta)$, where $\mathcal A$ is a split bundle satisfying the assumption of one of: Corollary \ref{cor half period}, Corollary \ref{cor even theta}, Corollary \ref{cor odd theta}. We use Lemma  \ref{extension of sch}  to produce a sub-bundle $\mathcal E\subset \mathcal A$ of rank $4$ and a quadric bundle $Q_0$ over $\mathbb P^2$ defined by a symmetric map $$\phi\colon \mathcal E (-\delta)\oplus \mathcal H (-\delta) \to \mathcal E^{\vee} \oplus \mathcal H^{\vee}$$ with
$\phi((e,h))=(F(e),D(h))$, where $F\colon \mathcal E(-\delta)\to  \mathcal E^{\vee}$ satisfies the conditions in \cite[Theorem 9]{Sch1} and $D\colon  \mathcal H(-\delta)\to  \mathcal H^{\vee}$ is a nowhere degenerate map, as in Lemma \ref{extension of sch}.
In that case, the quadric surface bundle $Q_F\subset \mathbb {P}(\mathcal E^{\vee})$ associated with $F$ is irrational and has the property that the very general element in any irreducible flat family of bundles degenerating to it is irrational \cite[Theorem 9]{Sch1}.  Note that $Q_0$ and $Q_F$ admit the same discriminant and associated cokernel but have different rank.

Let furthermore $\mathcal R$ be the corresponding moduli space of pairs $(C,\mathcal L)$ (half-periods, even or odd theta characteristics). By \cite{B}, we have a dominant map $H^0(S^2(\mathcal A(\delta)))\to \mathcal R$. Consider a one parameter family $q_t\in H^0(\operatorname{Sym}^2(\mathcal A(\delta)))$ with $q_0$ mapping to $F$ via the restriction map $H^0(\operatorname{Sym}^2 \mathcal A(\delta))\to H^0(\operatorname{Sym}^2 \mathcal E(\delta))$. Applying Lemma \ref{degeneration from extension}, we obtain a one-parameter family of quadric bundles $\{Q_t\}_{t\in {\mathbb C}}$ degenerating in $0$ to $Q_0$.

\begin{lemma} \label{Qt} There exists a family of isotropic sub-bundles $\mathcal{U}_t\subset \mathcal A$ for $t$ in an \'etale neighborhood of $0$ which are isotropic with respect to $Q_t$ and such that the corresponding quadric reduction $Q_t^{\mathcal U}$ has rank $4$. \end{lemma}
\begin{proof} Indeed, we saw in the proof of Corollaries \ref{cor half period}, \ref{cor even theta}, \ref{cor odd theta}, that for each fixed $t$ we have a family $\mathcal I_t$ of isotropic subbundles $\mathcal U_{t,\varphi}$ obtained as images of maximal rank morphisms $\varphi_t\colon \mathcal O(-1)^{l_1}\oplus \mathcal O(-2)^{l_2} \to \mathcal A$ for appropriate $l_1,l_2\in \mathbb Z$. Hence the family $\mathcal I_t$ is parametrized by a subset $\mathcal M_t\subset  \Hom(\mathcal O(-1)^{l_1}\oplus \mathcal O(-2)^{l_2},\mathcal A)$ which is a subset of the space of matrices with polynomial entries whose coefficients satisfy a system of quadratic equations listed explicitly in Corollaries \ref{cor half period}, \ref{cor even theta}, \ref{cor odd theta}. These equations linearly involve the parameter $t$. As a consequence, we have a scheme $$\mathcal M\subset \Hom(\mathcal O(-1)^{l_1}\oplus \mathcal O(-2)^{l_2},\mathcal A)\times \mathbb{C}^*$$ which admits a surjective projection $\pi\colon \mathcal M\to \mathbb{C}^*$. 
Any local section of $\pi$  in a neighborhood of $0$ leads to a family of isotropic sub-bundles $\{\mathcal U_t\}_{t\in \mathbb C}$. Thus the Lemma follows, as an \'etale local section of $\pi$ always exists. \end{proof}

\begin{prop}\label{Q0'}
Let $\{Q_t^{\mathcal U}\}_t$ be the family of quadric reductions of $Q_t$ with respect to $U_t$ for $0<|t|<<1$. Then there exists a quadric reduction $Q'_0$ of $Q_0$ over some open subset $V\subset \mathbb{P}^2$, such that $Q'_0$ is birational to $Q_F$ and appears as a flat limit of $Q_t^U|_V$.
\end{prop}
\begin{proof}
Note that the flat limit of the family of subbundles $ \mathbb P (U_t^{\vee})$ is a subvariety of $\mathbb{P}(\mathcal A_0)$ which is a weak sub-bundle (i.e. a sub-bundle when restricted to an open subset of $\mathbb{P}^2$) of rank equal to the dimension of $U_t$. Let $ V\subset \mathbb{P}^2$ be the open subset  where $U_0$ is a regular isotropic sub-bundle.  Consider the reduction $Q'_0$ of $Q_0|_V$ with respect to $U_0$ over $V$. By Proposition \ref{prop quadric reduction brauer class}, $Q'_0$ and $Q_0$ define the same Brauer class on their common discriminant double cover. On the other hand, by Theorem \ref{general IOOV} we know that $Q_0$ and $Q_F$ have a common discriminant double cover and define the same Brauer class. Finally, by  
Corollary  \ref{birational quadric bundle}, we conclude that $Q'_0$ is birational to $Q_F$ over $V$.
\end{proof}

Now, Corollary  \ref{cor irrationality} is a straightforward consequence of the discussion above.
\begin{proof}[Proof of Corollary  \ref{cor irrationality} ]
The quadric surface bundle $Q'_0|_V$ in Proposition \ref{Q0'} is birational to $Q_F$, hence it satisfies the assumptions of \cite[Theorem~9]{Sch1}. In particular, $Q^{\mathcal U}_t$ is not stably rational for very general $t$. Since in Lemma \ref{Qt}, we can choose any one-parameter family $\{Q_t\}_{t\in \mathbb C}$, and the union of them dominates the moduli space $\mathcal R$, the quadric surface bundles with very general (in $\mathcal R$) cokernel are also not stably rational. Moreover, the forgetful map from $\mathcal R$ to the moduli space of curves is finite; hence a quadric bundle with very general discriminant is also mapped to a very general element in $\mathcal R$ and hence it is not stably rational. %This concerns also $\widetilde Q$ from the beginning of the chapter.
\end{proof}

\begin{rem}\label{???1} Corollary \ref{cor irrationality} does not take into account the case of odd theta characteristics of degree $8$. The stable irrationality of these quadric surface bundles remains an open problem.
\end{rem}

\begin{rem}\label{???2}
We cannot use irrationality of some quadric bundle to prove irrationality of very general elements in their family defined by $(C,\cc)$. Indeed, a family of curves equipped with quasi half-periods does not need to admit a family of quadric surface bundles with these as associated cokernels. As an example, denote by $\mathcal Q_{\mathcal T}$ the family of quadric bundles which are given by symmetric maps $\varphi\colon \mathcal T(-1)\to \mathcal T^{\vee}$, where $\mathcal{T}^{\vee}=\Omega^1(2)\oplus \mathcal O(1)\oplus \mathcal O$. This is a family of quadric bundles that represent rank $4$ symmetric resolutions of general elements $(C,\cc)$ in the moduli space $\mathcal R$ of odd theta characteristics supported on curves of degree 8. On the other hand, there exists a family $\mathcal Q_{\mathcal S}$ of quadric bundles (studied in \cite{Sch1}) which corresponds to symmetric maps $\theta\colon \mathcal O(-1)^3 \oplus \mathcal O(-3) \to \mathcal O^3 \oplus \mathcal O(2)$ such that the very general element is not stably rational. Moreover, the cokernels are also odd theta characteristics on curves of degree $8$. Although every sheaf $\coker \theta$ appears as a degeneration of a family of sheaves of the form $\coker \varphi$, these degenerations do not extend to flat families of quadric bundles. In fact, a simple dimension count tells us that the space of deformations of quadric bundles in $\mathcal Q_{\mathcal S}$ has dimension higher than $61$, whereas the space of deformations of quadric bundles in  $\mathcal Q_{\mathcal T}$ has dimension smaller than $61$. One also checks that flat limits of quadric bundles from $\mathcal Q_{\mathcal T}$, which belong to $\mathcal Q_S$, are all either rational (they admit an isotropic section) or generically singular. As an example, we can consider the quadric reduction given by
$$0\to \Omega^1(1)^2\oplus \Omega^1\oplus \Omega^1(2)\to \oo^5\oplus \oo(1) \oplus (\oo\oplus \oo(-1))^3 \to \oo^3 \oplus \oo(2)\to 0.$$
 
\end{rem}
\begin{rem}The bounds for the degrees in Corollary \ref{cor irrationality} are related to the problem of constructing explicit dominant families of quadric surface bundles.  It was pointed {\color{black} out} to us by B. Hassett, that dominant families in the case of quadric bundles with higher degree discriminant could possibly be found using general techniques developed in \cite[\S 4.3]{HKT}.
\end{rem}

\section{Nodal Gushel--Mukai fourfolds -- Proof of Corollary \ref{corr}}\label{1.4}

Additionally to the models described in Corollary \ref{corr} there is also another family of varieties that are birational to general Verra fourfolds. These are one-nodal Gushel--Mukai fourfolds, characterized in the following way.

\begin{lem} \label{genNGM}There are two types of one-nodal Gushel--Mukai fourfolds:
\begin{enumerate}
    \item The intersections $W\cap C(Q_0)$, where $C(Q_0)$ is a quadric cone with vertex $P$ over a smooth quadric $Q_0\subset \PP^7$ and $W=G(2,5)\cap H$ is a smooth Fano fivefold, which is the intersection of the Grassmannian $G(2,5)$ in its Pl\"ucker embedding with a hyperplane $H$.
    
    \item The intersections of a quadric cone $C(Q_0)$ with vertex $P$  with $W'=C(G(2,5)\cap H_1 \cap H_2$ being a cone over a smooth linear section of the Grassmannian $G(2,5)$.

\end{enumerate}
Moreover, fourfolds of the second type are degenerations of fourfolds of the first type. 
\end{lem}
\begin{proof}
{\color{black}
For the proof we can mimic the proof of the analogous statement in the case of nodal Gushel-Mukai threefolds given in \cite[Lemma 4.1]{DIM0}.}
 % {\color{blue} OLD Recall that a Gushel--Mukai fourfold is the intersection of a cone $C(G)$ over the Grassmannian $G=G(2,5)$ with a codimension two linear space $L$ and a quadric. Then $L\cap C(G)$ is either smooth in which case we are in the situation above or $L\cap C(G)$ passes through the vertex of the cone and is singular only at this vertex. In all other situations, the singular locus of $L\cap C(G)$ would be positive dimensional and hence the Gushel--Mukai fourfold couldn't be one-nodal. It remains to point out that Gushel--Mukai fourfolds for which $L$ passes through the vertex  can be deformed to the case in which it doesn't, still keeping the singularity.}
\end{proof}
We can now formulate the relation between the families of Verra fourfolds and one-nodal Gushel--Mukai fourfolds. 
\begin{cor}\label{verbirNGM}
 A general Verra fourfold is birational to some one-nodal Gushel--Mukai fourfold. Moreover, a general one-nodal Gushel--Mukai fourfold is birational to some Verra fourfold.  
\end{cor}
Let $X$ be a general one-nodal Gushel--Mukai fourfold as in Lemma \ref{genNGM}(1).
As a consequence of the construction, the projection of $X$ from the node $P$ is contained in $Q_0$. This projection $X_0$ is the complete intersection of three quadrics containing a quadric threefold $Q\subset \PP^4_Q\subset \PP^7$, where $\PP^4_Q$ is a $4$-dimensional projective space containing $Q$. Indeed, by the explicit Pfaffian equations of $W$, we deduce that the image $W_0\subset \PP^7$ of the projection of $W$ from $P$ is the intersection of two quadrics $Q_1,Q_2$ containing $\PP^4_Q\subset \PP^7$. If we blow up $\PP^7$ in $\PP^4_Q$, we obtain a $\PP^5$-bundle $\widetilde{\PP^7}\to \PP^2$ containing a $\PP^3$-bundle $\widetilde{W_0}$. Denote by $R_X\subset \widetilde{W_0}$ the proper transform of $X_0$ through that blow up.

\begin{lem}\label{5.3} We have $\widetilde{\PP^7}=\PP(\oo_{\PP^2}^5 \oplus \oo_{\PP^2}(1))$ and $\widetilde{W_0}=\PP(\mathcal{G}^{\vee})$
where $$0\to \oo_{\PP^2}(-1)^2\to \oo_{\PP^2}^5 \oplus \oo_{\PP^2}(1)\to \mathcal{G}^{\vee}\to 0.$$

\end{lem}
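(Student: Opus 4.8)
The plan is to prove the two assertions in turn: first identify the ambient blow-up $\tilde{\PP}^7$, then cut it down by the two quadrics defining $W_0$. Write $\PP^7=\PP(V)$ and fix $0\to K\to V\to U\to 0$ with $\dim U=3$, so that $\PP^4_P=\PP(K)$ and the linear projection from $\PP^4_P$ has target $\PP^2=\PP(U)$. The blow-up $\pi\colon\tilde{\PP}^7\to\PP^2$ has, over the point corresponding to a line $\ell\subset U$, the fibre $\PP(\tilde\ell)$ where $\tilde\ell\subset V$ is the preimage of $\ell$. The rank-$6$ bundle $\mathcal E$ of these preimages fits into $0\to K\otimes\oo\to\mathcal E\to\oo_{\PP^2}(-1)\to 0$, the quotient $\oo_{\PP^2}(-1)$ being the tautological sub-line-bundle of $U\otimes\oo$. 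Since $\Ext^1(\oo_{\PP^2}(-1),\oo_{\PP^2}^{\oplus5})=H^1(\oo_{\PP^2}(1))^{\oplus5}=0$, this extension splits and $\mathcal E\cong 5\oo_{\PP^2}\oplus\oo_{\PP^2}(-1)$. As $\tilde{\PP}^7$ parametrises lines in $\mathcal E$, in the convention $\PP(\mathcal G)=\operatorname{Proj}S^\bullet(\mathcal G)$ used here it equals $\PP(\mathcal E^\vee)=\PP(5\oo_{\PP^2}\oplus\oo_{\PP^2}(1))$, which is the first claim; the check $H^0(5\oo\oplus\oo(1))=\C^8$ recovers the contraction $\tilde{\PP}^7\to\PP^7$ as the morphism given by $\oo_{\PP(\mathcal E^\vee)}(1)$.

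Next I would record divisor classes. Let $\xi=\oo_{\PP(5\oo\oplus\oo(1))}(1)$ and $h$ the pullback of a line from $\PP^2$; the contraction to $\PP^7$ identifies the hyperplane class $H$ with $\xi$, while the strict transform of a hyperplane through $\PP^4_P$ has class $h=H-E$, so $E=H-h$. A quadric containing $\PP^4_P$ is smooth at the generic point of the (codimension-three) centre, so its strict transform has class $2H-E=\xi+h$. Hence $\tilde{W_0}$, the strict transform of $W_0=Q_1\cap Q_2$, is the intersection in $\tilde{\PP}^7$ of two divisors of class $\xi+h$. Each such divisor is a section of $\oo_{\PP(\mathcal E^\vee)}(1)\otimes\pi^*\oo_{\PP^2}(1)$, and since $\pi_*\oo(\xi+h)=(5\oo\oplus\oo(1))(1)$, it corresponds to a map $\oo_{\PP^2}(-1)\to 5\oo_{\PP^2}\oplus\oo_{\PP^2}(1)$; the two quadrics assemble into $\sigma\colon 2\oo_{\PP^2}(-1)\to 5\oo_{\PP^2}\oplus\oo_{\PP^2}(1)$. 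Fibrewise, the common zero locus of the two sections over $x$ consists of the quotient lines of $(5\oo\oplus\oo(1))_x$ annihilating $\im\sigma_x$, i.e. $\PP$ of the cokernel of $\sigma_x$. Setting $\mathcal G^\vee=\coker\sigma$ therefore yields $\tilde{W_0}=\PP(\mathcal G^\vee)$ together with the stated resolution $0\to 2\oo(-1)\xrightarrow{\sigma} 5\oo\oplus\oo(1)\to\mathcal G^\vee\to 0$.

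The step requiring care, and the main obstacle, is showing that $\sigma$ has maximal rank $2$ at every point of $\PP^2$, so that $\mathcal G^\vee$ is a genuine rank-$4$ bundle and $\PP(\mathcal G^\vee)$ is an honest $\PP^3$-bundle rather than a locus with jumping fibres. Equivalently, one must check that $Q_1$ and $Q_2$ have everywhere linearly independent fibrewise-linear parts, i.e. that all fibres of $\tilde{W_0}\to\PP^2$ are $\PP^3$'s. This is exactly where the smoothness of $W=G(2,5)\cap H$ and the generality of the node $P$ enter: I would verify it from the explicit Pfaffian equations already used to exhibit $W_0$ as the complete intersection $Q_1\cap Q_2$ containing $\PP^4_P$, confirming that the corank locus of $\sigma$ is empty. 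Granting this, the two displays above give the lemma.
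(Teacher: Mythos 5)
Your argument is correct and is, in expanded form, exactly what the paper's two-line proof gestures at: the blow-up is $\PP$ of the (split, by the vanishing of $H^1(\oo_{\PP^2}(1))$) bundle of preimages of the tautological line, and the two quadrics through $\PP^4_P$ supply the two maps $\oo_{\PP^2}(-1)\to 5\oo_{\PP^2}\oplus\oo_{\PP^2}(1)$ whose cokernel is $\mathcal{G}^{\vee}$. You are also right that the only substantive point is the everywhere-rank-$2$ condition on $\sigma$ (equivalently, that every fibre of $\tilde{W_0}\to\PP^2$ is a $\PP^3$); the paper leaves this implicit in its earlier assertion that $\tilde{W_0}$ is a $\PP^3$-bundle, and it holds by the generality of the nodal Gushel--Mukai fourfold, as you indicate.
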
  
\begin{proof}
The exact sequence is due to the embedding $\PP(\mathcal{G}^{\vee})\subset
\PP(\oo_{\PP^2}^5 \oplus \oo_{\PP^2}(1))$. The map from $\oo_{\PP^2}(-1)^2$ corresponds to the two quadrics defining $W_0$. 
\end{proof}
\begin{rem}
One can see that for a general $\mathcal{G}$ as above, the bundle $\mathcal{G}^{\vee}$ admits an injective resolution $0\to \mathcal{G}\to \oo_{\PP^2}(1)^4\oplus \oo_{\PP^2}(2)\to \oo_{\PP^2}(3)\to 0$. This can be proved by means of a resolution of $\mathcal{G}^{\vee}$ and using a computer algebra program.
\end{rem}
Note that the resolution in Lemma \ref{5.3} is of the same shape as that in Example \ref{po}(2), hence we can use Corollary \ref{deg6} once we prove that corresponding quadric bundle is general enough. 

Let $\oo_{\PP(\mathcal G^{\vee})}(1)$ be the sheaf inducing the map $\PP(\mathcal{G}^{\vee})\to W_0\subset \PP^7$. Then there exists a non-zero global section $G\in H^0(\oo_{\PP(\mathcal{G}^{\vee})}(2)) \simeq H^0(\Sym^2(\mathcal G^{\vee}))$, which induces a family of quadratic forms $q\colon \oo_{\PP^2}\to \Sym^2(\mathcal{G}^{\vee})$, whose zero locus is $R_X$. This induces a symmetric map of sheaves on $\PP^2$ whose cokernel is denoted by $\cc$, namely: 
$$0\to \mathcal{G}\xrightarrow{q} \mathcal{G}^{\vee}\to \cc\to 0.$$ Taking the determinant  of $q_G$ gives a canonical isomorphism (see Lemma \ref{quasi half period})
$$\det(q_G)\colon \cc\otimes \det(\mathcal{G}^{\vee})\to (\cc\otimes \det(\mathcal{G}^{\vee}))^{\vee}.$$ Hence $\cc(-3)$ is a half-period on the discriminant sextic curve $C\subset \PP^2$.

\begin{lem}
The half-period $\cc(-3)$
is general, i.e.~$h^0(\cc(-2))=0$.
\end{lem}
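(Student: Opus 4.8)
The plan is to read off $h^0(\cc(-2))$ directly from the symmetric resolution $0\to\mathcal G\xrightarrow{q}\mathcal G^{\vee}\to\cc\to 0$ by twisting it with $\mathcal O(-2)$ and running the associated long exact cohomology sequence, whose relevant segment is
$$H^0(\mathcal G^{\vee}(-2))\to H^0(\cc(-2))\to H^1(\mathcal G(-2)).$$
It therefore suffices to establish the two vanishings $H^0(\mathcal G^{\vee}(-2))=0$ and $H^1(\mathcal G(-2))=0$; both follow formally from the preceding lemma (which gives the defining sequence of $\mathcal G^{\vee}$) together with the fact that $H^1(\PP^2,\mathcal O(k))=0$ for every $k$. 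Note that these two groups depend only on the bundle $\mathcal G$, which is fixed by the geometry, so the argument is insensitive to the particular choice of section $G$.

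For the first vanishing I would twist the defining sequence of $\mathcal G^{\vee}$ by $\mathcal O(-2)$, obtaining
$$0\to 2\mathcal O(-3)\to 5\mathcal O(-2)\oplus\mathcal O(-1)\to\mathcal G^{\vee}(-2)\to 0.$$
Since $H^0(5\mathcal O(-2)\oplus\mathcal O(-1))=0$ and $H^1(2\mathcal O(-3))=0$, the corresponding long exact sequence forces $H^0(\mathcal G^{\vee}(-2))=0$.

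For the second vanishing I would first dualise the defining sequence of $\mathcal G^{\vee}$. As all terms are locally free this yields
$$0\to\mathcal G\to 5\mathcal O\oplus\mathcal O(-1)\to 2\mathcal O(1)\to 0,$$
whose orientation is confirmed by a first Chern class check ($c_1(\mathcal G)=-3$). Twisting by $\mathcal O(-2)$ gives
$$0\to\mathcal G(-2)\to 5\mathcal O(-2)\oplus\mathcal O(-3)\to 2\mathcal O(-1)\to 0,$$
and because $H^0(2\mathcal O(-1))=0$ and $H^1(5\mathcal O(-2)\oplus\mathcal O(-3))=0$ the long exact sequence yields $H^1(\mathcal G(-2))=0$.

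Combining the two vanishings, $H^0(\cc(-2))$ is squeezed between two zero groups, so $h^0(\cc(-2))=0$, which is precisely the asserted generality of the two-torsion sheaf $\cc(-3)$ (indeed $\cc(-2)$ is a degree $6$ line bundle on the genus $10$ sextic $C$, so $h^0=0$ is the expected value by Riemann--Roch, realised here on the nose). I do not expect a genuine obstacle: the only points to watch are dualising the defining sequence of $\mathcal G^{\vee}$ correctly, which is where the local freeness of $\mathcal G$ (equivalently the emptiness of the corank-two locus) enters, and the bookkeeping of twists — and the latter is made robust by the vanishing of every $H^1$ of a line bundle on $\PP^2$.
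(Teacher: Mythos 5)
Your proof is correct and is exactly what the paper intends: its one-line proof (``We use the long cohomology exact sequences used to define $\mathcal G$'') is precisely your argument, namely twisting $0\to\mathcal G\to\mathcal G^{\vee}\to\cc\to 0$ by $\mathcal O(-2)$ and killing $H^0(\mathcal G^{\vee}(-2))$ and $H^1(\mathcal G(-2))$ via the defining sequence of $\mathcal G^{\vee}$ and its dual. All the twists, the dualisation (legitimate since all terms are locally free), and the line-bundle cohomology vanishings check out.
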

\begin{proof}
Apply the long cohomology exact sequence coming from the exact sequence defining  $\mathcal G$.
\end{proof}

Since $\cc(-3)$ is general, we deduce a free resolution from \cite[Proposition 3.1]{B}:
$$0\to \oo_{\PP^2}(-1)^3\xrightarrow{N} \oo_{\PP^2}(1)^3\to \cc\to 0,$$
where $N$ is a symmetric matrix of quadrics.
If we expand $N$ by the identity on an additional component $\oo_{\PP^2}$, we obtain the following non-minimal resolution: $$0\to \oo_{\PP^2}(-1)^3\oplus \oo_{\PP^2}\xrightarrow{N'} \oo_{\PP^2}(1)^3\oplus \oo_{\PP^2}\to \cc\to 0.$$

Notice that $N'$ gives a section of  $H^0(\Sym^2(\oo_{\PP^2}(1)^3\oplus \oo_{\PP^2})) \simeq H^0(\oo_{\PP(\oo_{\PP^2}(1) \oplus \oo_{\PP^2})}(2))$ and is associated to the case (4) in Example \ref{po}.

\begin{lem}\label{Verra} The image of $\PP(\oo_{\PP^2}(1)^3\oplus \oo_{\PP^2})$ via the linear system
$|\oo_{\PP(\oo_{\PP^2}(1)^3\oplus \oo_{\PP^2})}(1)|$ is a cone $C(\PP^2\times \PP^2)\subset \PP^9$ and $N'$ defines a Verra fourfold $V_X$. 
\end{lem}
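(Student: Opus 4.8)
The plan is to make the morphism $\Phi\colon\PP(3\oo_{\PP^2}(1)\oplus\oo_{\PP^2})\to\PP^9$ defined by $\oo_{\PP(\cdot)}(1)$ completely explicit, identify its image with the cone, and then read off the double-cover structure of the quadric section cut out by $N'$. First I would compute the linear system. Writing $\mathcal V=3\oo_{\PP^2}(1)\oplus\oo_{\PP^2}$, the projection formula gives $H^0(\oo_{\PP(\mathcal V)}(1))=H^0(\mathcal V)=H^0(3\oo(1))\oplus H^0(\oo)$, of dimension $9+1=10$, and $\oo(1)$ is base-point free because $\mathcal V$ is globally generated, so it defines a morphism to $\PP^9$. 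I would single out the $9$ sections coming from $3\oo(1)$, namely $z_iw_j$ with $i,j=0,1,2$, and the single section $u$ coming from $\oo_{\PP^2}$, and describe $\Phi$ fibrewise as $([z],[w_1:w_2:w_3:u])\mapsto[z_0w_1:\dots:z_2w_3:u]$.

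Second, I would identify the image. On the divisor $\PP(3\oo(1))=\{u=0\}$ one has $3\oo(1)=\oo(1)\otimes 3\oo$, hence $\PP(3\oo(1))\cong\PP^2\times\PP^2$ and $\oo(1)$ restricts to $\oo(1,1)$; thus $\{u=0\}$ is carried by the $z_iw_j$ through the Segre embedding onto $\Sigma:=\PP^2\times\PP^2\subset\PP^8=\{\text{last coordinate}=0\}$. On the other hand the section $\PP(\oo_{\PP^2})=\{w=0\}$, on which $\oo(1)$ restricts to $\oo_{\PP^2}$, is contracted to $v=[0:\dots:0:1]$. Since the $3\times3$ matrix $(z_iw_j)=z\otimes w$ has rank one, every image point lies on the cone $C(\Sigma)$; conversely a general point of $C(\Sigma)$ recovers $[z],[w]$ and then $u$ uniquely, so $\Phi$ is a birational morphism onto $C(\Sigma)=C(\PP^2\times\PP^2)\subset\PP^9$, contracting exactly $\{w=0\}$ to $v$. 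This yields the first assertion.

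Finally, I would analyse the quadric section. The matrix $N'=\mathrm{diag}(N,1)$ gives the section $\sum_{j,k}N_{jk}(z)w_jw_k+u^2$ of $\oo(2)=\Phi^*\oo_{\PP^9}(2)$; writing $N_{jk}(z)=\sum c^{jk}_{ii'}z_iz_{i'}$ this is the pullback of the genuine quadric $\tilde N=\sum c^{jk}_{ii'}a_{ij}a_{i'k}+u^2$ on $\PP^9$. Because the $u^2$-coefficient is nonzero, $V(N')$ is disjoint from the contracted section $\{w=0\}$ (and from $v$), so $\Phi$ restricts to an isomorphism of $V(N')$ onto $C(\Sigma)\cap\{\tilde N=0\}$. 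Projecting this section from $v$ to $\Sigma=\PP^2\times\PP^2$ is then a finite degree-two map: along the line joining $v$ to $([z],[w])$ the form $\tilde N$ restricts to $\lambda^2 N(z,w)+\mu^2$, where $N(z,w)=\sum_{j,k}N_{jk}(z)w_jw_k$ is a $(2,2)$-form, so the two sheets meet exactly over the $(2,2)$-divisor $\{N(z,w)=0\}$. Hence $V(N')$ is the double cover of $\PP^2\times\PP^2$ branched along a $(2,2)$-divisor, i.e.~a Verra fourfold.

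I expect the main obstacle to be the careful bookkeeping of the $\oo(1)$-twist in the summand $3\oo(1)$ --- in particular verifying that $\oo_{\PP(\mathcal V)}(1)$ restricts to $\oo(1,1)$ on the Segre divisor and to $\oo_{\PP^2}$ on the contracted section --- together with confirming that $\Phi$ is genuinely birational (not merely surjective) onto the cone. Once the explicit coordinate description of $\Phi$ is in place, both of these reduce to routine checks, and the identification of the quadric section with a branched double cover follows immediately from the line-by-line computation above.
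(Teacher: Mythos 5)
Your proof is correct and follows essentially the same route as the paper: identify $\PP(3\oo_{\PP^2}(1))$ with $\PP^2\times\PP^2$ embedded by Segre via the tautological class, and observe that $\oo_{\PP(3\oo_{\PP^2}(1)\oplus\oo_{\PP^2})}(1)$ then contracts the section $\PP(\oo_{\PP^2})$ and maps the total space onto the cone over that Segre image. Your explicit coordinate computation, and in particular the verification via projection from the vertex that the quadric section cut out by $N'$ is a double cover of $\PP^2\times\PP^2$ branched in a $(2,2)$-divisor, supplies details (especially for the second assertion) that the paper leaves implicit, but it is an elaboration of the same argument rather than a different one.
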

\begin{proof} Observe that $\PP(\oo_{\PP^2}(1)^3)$ is isomorphic to $\PP(\oo_{\PP^2}^3)$ so to $\PP^2\times \PP^2$  and its tautological divisor defines  the Segre embedding $\PP^2\times \PP^2\subset \PP^8$.
Hence the tautological bundle on  $\PP(\oo_{\PP^2}\oplus \oo_{\PP^2}(1)^3)$ maps the considered projective bundle to the cone $C(\PP^2\times \PP^2)\subset \PP^9$ over the image of $\PP(\oo_{\PP^2}(1)^3)$ in $\PP^8$. Now $N'$ is associated to a quadric section of the image hence to a Verra fourfold. 
\end{proof}
Summing up, to a general nodal Gushel--Mukai fourfold $X$ we associated a Verra fourfold $V_X$.
\begin{lemma}\label{VXRX}
 Let $X$ be a general one-nodal Gushel--Mukai fourfold. Then the quadric bundles $R_X$ and $V_X$ {\color{black} are isomorphic at the generic point of} $\mathbb{P}^2$. In particular,  the Gushel--Mukai fourfold $X$ and the Verra fourfold $V_X$ are birational.
\end{lemma}
\begin{proof} The quadric bundles $R_X$ and $V_X$ induce the same half-period $\cc$ on the discriminant curve $C$, so we can apply Theorem \ref{main}.
 \end{proof}
 
 \begin{proof}[Proof of Corollary \ref{verbirNGM}]
 The first assertion follows directly from Lemma \ref{VXRX}. To prove the second assertion start with a general Verra fourfold $V$ and using Corollary \ref{deg6}
find a birational model of $V$ that corresponds to a quadric bundle $R$ given in $\widetilde{\mathbb P^7}$ as the intersection of three divisors as in Corollary \ref{deg6}(2). The image of the projection of $R$ to $\mathbb P^7$ is then  a complete intersection $X_0$ of three quadrics containing a three-dimensional quadric. It follows that $X_0$ can be seen as a the intersection of:  a complete intersection $W_0$ of two quadrics containing a $\mathbb{P}^4$ in $\mathbb{P}^7$, and another quadric $Q_0$. By standard unprojection \cite[\S 2.5.1 ]{PapadakisThesis} the variety $W_0$ is the projection of a hyperplane section of the Grassmannian $G(2,5)$ (in its Pl\"ucker embedding) from a point on it. Hence $W_0\cap Q_0$ is birational to its preimage via the projection which is a nodal Gushel--Mukai fourfold.
 \end{proof}
 
\section*{Acknowledgements}
G.K.~is supported by the project Narodowe Centrum Nauki 2018/30/E/ST1/00530. M.K.~is supported by the project Narodowe Centrum Nauki 2018/31/B/ST1/02857.  G.B. thanks the Jagiellonian University and IMPAN for warm hospitality during his stay in Cracow in December 2018. We would like to thank S.~Kuznetsov for his interest in our result and many valuable comments.
We also thank B.~Hassett, R.~Laterveer, J.~Ottem, A.~Kresch,  B.~van Geemen, and J.~Weyman for useful comments, as well as the anonymous referees for important remarks.

\newpage

 \centerline{\sc APPENDIX A.} 
 {\color{black}The following script can also be found in \cite{M2GH}.}
\begin{verbatim}
CheckDom = (r,l)-> 
(k=4-l;
n=3;
S=ZZ[x_1..x_(n*r*(r+k))];
DS=ZZ[y_1..y_(r*l)];
R=ZZ[t_1..t_n];
T=S**DS**R;
for j from 1 to 5 do
(
if l==0 then K=0 else K=genericMatrix(DS,l,r);
M=random(R^r, R^{l:-2},Height=>2);
ZD=random(R^l, R^{l:-2},Height=>2);
Z=ZD+transpose ZD;
if l==0 then B=0 else B= (transpose((map(T,DS)) K))*((map(T,R)) Z)*((map(T,DS)) K);
if l==0 then F=0 else F=( ((map(T,R)) M))*((map(T,DS)) K);
if l==0 then G=0 else G=F+transpose F;
N1=genericMatrix(S,n*r,r+k);
V=(vars R)**diagonalMatrix(for i from 0 to r-1 list 1);
N=((map(T,R))V)*((map(T,S))N1);
etaKN=(N*(transpose N))+G+B;
IKN=mingens  ideal flatten etaKN;
DKN1=mingens ideal flatten diff(matrix{{t_1..t_n}},diff(matrix{{t_1..t_n}},IKN));
DKN=(map(S**DS,T)) DKN1;
JDKN=jacobian DKN;
JDKNx=(map(ZZ,ring JDKN, random(ZZ^1,ZZ^(n*r*(r+k)+r*l))))JDKN;
ds=numgens source JDKNx;
dt=numgens target JDKNx;
LM=for i from 1 to 5 list det(random((ring JDKNx)^ds,(ring JDKNx)^dt, Height=>2)*JDKNx;
g_j=gcd(LM));
);
return factor gcd(g_1,g_2,g_3,g_4,g_5))
\end{verbatim}

\end{document}